
\documentclass[12pt,review,
]{elsarticle}

\usepackage{amsmath,amsthm,amssymb}
\usepackage{euscript,mathrsfs}
\usepackage{verbatim}

\usepackage{geometry}
	\geometry{textwidth= 15.9cm, textheight= 22cm}
\usepackage{hyperref}
	\hypersetup{pdfauthor=author}
\usepackage[capitalize]{cleveref}
	\crefname{equation}{}{}
	\crefformat{section}{\textsection#2#1#3}
	\crefformat{appendix}{\textsection#2#1#3}

\theoremstyle{plain}
	\newtheorem{thm}{Theorem}[section]
	\newtheorem{lem}[thm]{Lemma}
	\newtheorem{pro}[thm]{Proposition}
	\newtheorem{cor}[thm]{Corollary}
	
	\newtheorem{thmk}{Theorem}

\theoremstyle{definition}
	\newtheorem{defn}[thm]{Definition}
	\newtheorem{rem}[thm]{Remark}
	\newtheorem{example}[thm]{Example}
	\newtheorem*{notation}{Notation}
	\newenvironment{eg}{\begin{footnotesize} \begin{example}}{\end{example}\end{footnotesize}}

\newcommand{\N}{\mathbb{N}}
\newcommand{\Z}{\mathbb{Z}}
\newcommand{\R}{\mathbb{R}}
\newcommand{\C}{\mathbb{C}}
\newcommand{\T}{\mathbb{T}}
\newcommand{\cB}{\mathcal{B}}
\newcommand{\cH}{\mathcal{H}}
\newcommand{\cC}{\mathcal{C}}
\newcommand{\eG}{\EuScript{G}}
\newcommand{\eF}{\EuScript{F}}
\newcommand{\eS}{\EuScript{S}}
\newcommand{\eU}{\EuScript{U}}
\newcommand{\fS}{\mathfrak{S}}

\newcommand{\tr}{\mathrm{tr\,}}
\newcommand{\rank}{\mathrm{rank\,}}
\newcommand{\dist}{\mathrm{dist}}
\newcommand{\supp}{\mathrm{supp\,}}
\newcommand{\flow}{\mathrm{sf}\,}
\newcommand{\dis}{\sigma_{\mathrm{dis}}}
\newcommand{\ess}{\sigma_{\mathrm{ess}}}

\newcommand{\Lip}{{\normalfont \textbf{Lip\textsubscript{*}}}}
\newcommand{\SP}{\mathrm{SP}}

\newcommand{\textbi}[1]{\textit{\textbf{#1}}}

\begin{document}

\begin{frontmatter}

\title{A Topological Approach to Unitary Spectral Flow via Continuous Enumeration of Eigenvalues} 

\author[UNSW]{Nurulla Azamov}
\ead{nurulla.azamov@unsw.edu.au}

\author[Independent Scholar]{Tom Daniels}
\ead{tomdaniels86@hotmail.com }

\author[Shinshu]{Yohei Tanaka\corref{corresponding}}
\ead{20hs602a@shinshu-u.ac.jp}

\cortext[corresponding]{Corresponding author}

\address[UNSW]{University of New South Wales, Kensington, NSW, 2052, Australia}
\address[Independent Scholar]{Independent scholar, SA, Australia}
\address[Shinshu]{Division of Mathematics and Physics, Faculty of Engineering, Shinshu University, Wakasato, Nagano 380-8553, Japan}

\begin{abstract}
It is a well-known result of T.\,Kato that given a continuous one-parameter family of square matrices of a fixed dimension, the eigenvalues of the family can be chosen continuously. In this paper, we give an infinite-dimensional analogue of this result, which arises in the context of unitary spectral flow. This intuitive topological approach to unitary spectral flow via continuous enumeration of eigenvalues appears to be missing from the existing literature, and it is the purpose of the present paper to fill in the gap. It is also shown in this paper that the notion of continuous enumeration naturally leads to a variant of the celebrated theorem of Dold-Thom in algebraic topology.
\end{abstract}

\begin{keyword}
Unitary Spectral Flow \sep Continuous enumeration of Eigenvalues \sep Schatten-class Perturbations \sep Dold-Thom Theorem
\end{keyword}

\end{frontmatter}

\section{Introduction}
\label{Section: Introduction}

Let us first start with finite-dimensional continuous enumeration due to T.\,Kato. The following exposition is directly taken from \cite[\textsection VI.1]{Book:Bhatia97}. Let $X$ be a metric space endowed with a metric $d,$ and let $\SP^n(X)$ be the finite $n$-th symmetric product of $X.$ Recall that $\SP^n(X)$ is the quotient topological space obtained from $X^n$ via the equivalence relation which identifies two $n$-tuples of elements, if they are permutations of each other (see \cite[\textsection 3.C]{Book:Hatcher02} for details). That is, $\SP^n(X)$ can be viewed as the space of unordered $n$-tuples of elements of $X.$ We denote by $[\lambda_1,\dots,\lambda_n]$ the equivalence class represented by an $n$-tuple $(\lambda_1,\dots,\lambda_n) \in X^n.$ The symmetric product $\SP^n(X)$ is metrisable by
\[
\dist\,([\lambda_1,\dots,\lambda_n],[\lambda'_1,\dots,\lambda'_n])
    := \min_{\pi} \max_{1 \leq i \leq n} d(\lambda_i, \lambda'_{\pi_i}),
\]
where the minimum is taken over all permutations $\pi.$ The following result is well-known;

\begin{thmk}[{\cite[Theorem II.5.2]{Book:Kato95}}]
\label{Theorem: Kato's Selection Theorem}
Let $I$ be any subinterval of $[-\infty,\infty],$ and let $\lambda$ be a continuous $\SP^n(\C)$-valued mapping on $I.$ Then there exist finitely many continuous functions $\lambda_1,\dots,\lambda_n : I \to \C,$ such that $\lambda(t) = [\lambda_1(t),\dots,\lambda_n(t)]$ for all $t \in
I.$
\end{thmk}
As is typical, a selection theorem of this kind is not altogether straightforward to prove even in this finite-dimensional setting. On one hand the domain $I$ cannot be extended to contain an open subset of the complex plane (see, for example, \cite[Example VI.1.3]{Book:Bhatia97}). On the other hand the range $\SP^n(\C)$ can be replaced by a general finite symmetric product $\SP^n(X).$ Note that the continuity of the following mapping is also well-known (see, for example, \cite[\textsection VI.1]{Book:Bhatia97});
\begin{equation}
\label{Equation: Bhatia's Mapping}
M_n(\C) \ni A \longmapsto [\lambda_1(A),\dots,\lambda_n(A)] \in \SP^n(\C),
\end{equation}
where $M_n(\C)$ is the set of all $n \times n$ matrices over $\C$ and $\lambda_1(A),\dots,\lambda_n(A)$ are the eigenvalues of $A$ repeated according to their multiplicities. We may identify the unordered tuple $[\lambda_1(A),\dots,\lambda_n(A)]$ on the right hand side of \cref{Equation: Bhatia's Mapping} with the spectrum $\sigma(A)$ of the matrix $A.$ The following result, referred to as \textit{Kato's finite-dimensional continuous enumeration of eigenvalues}, is an immediate consequence of \cref{Theorem: Kato's Selection Theorem} and the continuity of \cref{Equation: Bhatia's Mapping};

\begin{thmk}[Kato's finite-dimensional continuous enumeration of eigenvalues]
\label{Theorem: Kato's Finite-dimensional Continuous Enumeration}
Let $I$ be any subinterval of $[-\infty,\infty],$ and let $A$ be a continuous $M_n(\C)$-valued mapping on $I.$ Then there exist finitely many continuous functions  $\lambda_1, \dots, \lambda_n : I \to \C,$ such that $\sigma(A(t)) = [\lambda_1(t), \dots, \lambda_n(t)]$ for all $t \in I.$
\end{thmk}

This paper gives a certain infinite-dimensional analogue of \cref{Theorem: Kato's Finite-dimensional Continuous Enumeration} which naturally arises in the context of \textit{unitary spectral flow}. In the setting of operators on Hilbert space, the scope of continuous enumeration is obviously restricted to discrete eigenvalues, or pure point spectrum, and does not make sense for continuous components of spectrum. We will consider only the discrete spectrum, consisting of isolated eigenvalues of finite multiplicity, and only compact perturbations thereof. This ensures that there are countably many eigenvalues of interest, which can accumulate at the essential spectrum, whose location, given Weyl's theorem, is fixed. For concreteness, let us consider the topological group $\eU_\Phi(\cH,1)$ of all those unitary operators on $\cH$ with the property that $U - 1$ belongs to the \textit{Schatten class} $\fS_\Phi(\cH)$ of compact operators determined by the \textit{symmetric norm} $\Phi$ (see \cref{Section: Symmetric Norms} and \cref{Section: Preliminaries of Flow of Discrete Spectrum} respectively for the definitions of $\Phi$ and $\fS_\Phi(\cH)$). Note that the topology on $\eU_\Phi(\cH, 1)$ is given by the following complete metric;
\[
\dist_\Phi(U,U') := \|U - U'\|_{\Phi}, \qquad U,U' \in \eU_\Phi(\cH,1),
\]
where $\|\cdot\|_{\Phi}$ is the standard norm on $\fS_\Phi(\cH).$ It follows from Weyl's theorem that any unitary operator $U \in \eU_\Phi(\cH,1)$ shares the common essential spectrum $\ess(U) = \{1\}$ on the unit-circle $\T.$ The following infinite-dimensional analogue of \cref{Theorem: Kato's Finite-dimensional Continuous Enumeration} is one of the main theorems of this paper;

\begin{thm}
\label{Theorem: Baby Continuous Enumeration}
Let $I$ be any subinterval of $[-\infty,\infty],$ and let $\{U(t)\}_{t \in I}$ be a continuous one-parameter family of operators in $\eU_\Phi(\cH,1).$ Then there exist infinitely many continuous functions $\lambda_1, \lambda_2,  \dots : I \to \T,$ such that for each $t \in I$ the sequence $(\lambda_j(t))_{j \in \N}$ contains all of the members of the spectrum $\sigma(U(t)),$ each of which is repeated according to its multiplicity. Here, the multiplicity of $1$ is defined to be infinite.
\end{thm}

Motivated by Kato's finite-dimensional approach, we shall break the proof of \cref{Theorem: Baby Continuous Enumeration} into two parts. Firstly, we introduce a certain infinite analogue of the finite symmetric product $\SP^n(\T),$ which will be denoted in this case by $\eS_\Phi(\T,1),$ so that each $\sigma(U(t))$ can be naturally viewed as a member of $\eS_\Phi(\T,1).$ 
We define a metric on $\eS_\Phi(\T,1)$ in such a way that an infinite-dimensional variant of the Hoffman-Wielandt inequality for normal matrices (see \cref{Section: Preliminaries of Flow of Discrete Spectrum} for details) immediately implies the Lipschitz continuity of the family $\{\sigma(U(t))\}_{t\in I}.$ Secondly, we prove the existence of a continuous enumeration for any continuous path in $\eS_\Phi(\T,1),$ the statement of which is nothing but \cref{Theorem: Kato's Selection Theorem} with $\SP^n(\C)$ replaced by $\eS_\Phi(\T,1)$ and the finitely many continuous functions $\lambda_1, \dots, \lambda_n$ replaced by infinitely many continuous functions. 

The origin of \textit{spectral flow} for continuous one-parameter families of self-adjoint Fredholm operators goes back to \cite{Atiyah-Patodi-Singer75}. Given such a family $\{F(t)\}_{t \in [0,1]},$ we can understand its spectral flow as the net number of eigenvalues of $F(t)$ that cross $0$ rightward. This well-known homotopy invariant has since found many connections, for example, to the Fredholm index \cite{Robbin-Salamon95}. The main focus of the present paper is the notion of spectral flow for unitary operators. As with the self-adjoint Fredholm case, we can intuitively understand the spectral flow of a continuous one-parameter family $\{U(t)\}_{t \in [0,1]}$ in $\eU_\Phi(\cH,1)$ to be the integer-valued function $\flow(-;\{U(t)\}_{t \in [0,1]}) : (0, 2\pi) \to \Z$ given by
\begin{align}
\notag 
\flow(\theta;\{U(t)&\}_{t \in [0,1]}) :=  \\
\label{Equation: Naive Definition of Unitary Spectral Flow} \langle &\mbox{the number of eigenvalues of $U(t)$ that cross $e^{i\theta}$ anticlockwise} \rangle \\
- \langle &\mbox{the number of eigenvalues of $U(t)$ that cross
$e^{i\theta}$ clockwise} \rangle \nonumber
\end{align}
as $t$ monotonically increases from $0$ to $1.$ 
In \cite{Pushnitski01} the definition \cref{Equation: Naive Definition of Unitary Spectral Flow} is made precise, and is used to express the spectral shift function (see \cite{Lifshitz52,Krein53,Gesztesy-Makarov00,Book:Yafaev92,Simon98}) as the averaged spectral flow of a certain continuous path of unitary operators connecting the scattering matrix with the identity in $\eU_{\Phi}(\cH,1)$. 

\cref{Theorem: Baby Continuous Enumeration} provides another intuitive and direct method for showing that \cref{Equation: Naive Definition of Unitary Spectral Flow} is well-defined. 
This approach is used in \cite{Azamov11,Azamov-Daniels19} to express the absolutely continuous part of the spectral shift function as the averaged spectral flow of a path in $\eU_{\Phi}(\cH,1)$, which connects the scattering matrix with the identity in a homotopically non-equivalent way to that of~\cite{Pushnitski01}, and therefore to express the singular part of the spectral shift function as the integer spectral flow of a loop in $\eU_{\Phi}(\cH,1)$ based at the identity.
Note, however, that many non-trivial results including the existence of continuous enumeration appear in these papers without proofs, and those missing proofs can be found in the present paper. 

For simplicity, let us assume that the given family $\{U(t)\}_{t \in [0,1]}$ is a loop of the form $U(0) = U(1) = 1.$ In this case, the spectral flow of $\{U(t)\}_{t \in [0,1]}$ does not depend on the choice of the reference point $e^{i \theta},$ since it represents the net number of the windings that the eigenvalues of $\{U(t)\}_{t \in [0,1]}$ make in the anti-clockwise direction. The family $\{\sigma(U(t))\}_{t \in [0,1]}$ admits a continuous enumeration $\lambda_1, \lambda_2, \dots$ in the sense of \cref{Theorem: Baby Continuous Enumeration}, and this allows us to define 
\begin{equation}
\label{Equation: Naive Definition of Spectral Flow for Loops}
\flow(\{U(t)\}_{t \in [0,1]}) := [\lambda_1]_{\pi_1} +  [\lambda_2]_{\pi_1} +  [\lambda_3]_{\pi_1} + \dots,
\end{equation}
where each homotopy class $[\lambda_j]_{\pi_1}$ represents the winding number of the loop $\lambda_j$ in the fundamental group $\pi_1(\T,1) = \Z.$ Note that \cref{Equation: Naive Definition of Spectral Flow for Loops} provides an explicit group homomorphism from $\pi_1(\eU_\Phi(\cH,1),1)$ into $\Z,$ thereby proving that the subgroup $\eU_\Phi(\cH,1)$ of the unitary group $\eU(\cH)$ is not simply connected, even though $\pi_1(\eU(\cH))$ is well-known to be trivial. 

For full generality, we shall consider an arbitrary based metric space $(X,x_0)$ instead of $(\T,1).$ This paper is organised as follows. The purpose of \cref{Section: Summable Multisets} is to introduce the metric space $\eS_\Phi(X,x_0),$ whose members are infinite unordered tuples of elements of $X,$ each of which contains the basepoint $x_0$ infinitely many times and satisfies a certain summability condition with respect to $\Phi.$ In \cref{Section: Continuous Enumeration of Multiset-valued Mappings} we prove \cref{Theorem: Continuous Enumeration}, the existence of continuous enumeration for continuous paths in $\eS_\Phi(X,x_0).$ 
This allows us to give an intuitive exposition of the unitary spectral flow in \cref{Section: Flow of Discrete Spectrum}. 
This paper concludes with Appendix \cref{Section: Appendix}, in which the functor $(X,x_0) \longmapsto \eS_\Phi(X,x_0)$ is shown to preserve both separability and completeness. This supplementary material can be read independently from \cref{Section: Continuous Enumeration of Multiset-valued Mappings} and \cref{Section: Flow of Discrete Spectrum}.

On a final note, the formula \cref{Equation: Naive Definition of Spectral Flow for Loops} motivates us to introduce a concrete isomorphism from the fundamental group $\pi_1(\eS_\Phi(X,x_0))$ onto the first singular homology group $H_1(X)$ via continuous enumeration. The authors would like to thank D.\,Tamaki for pointing out that this is an analogue of the \textit{Dold-Thom theorem}, which states $\pi_1(\SP^\infty(X,x_0)) \simeq H_1(X)$ (see, for example, \cite[\textsection 4.K]{Book:Hatcher02}). The rigorous treatment of this material can be found in Y.T.'s master's thesis \cite{Tanaka14}, and it will be the subject of another paper in preparation.

\section{Summable Multisets}
\label{Section: Summable Multisets}

The current section is organised as follows. In \cref{Section: Symmetric Norms} we given an overview of symmetric norms. The reader who is not familiar with this notion may as well assume that the symmetric norm $\Phi$ in this paper is any \textbi{$p$-norm} $\Phi_p,$ where $1 \leq p \leq \infty,$ and directly proceed to \cref{Section: Countable Multisets}. Recall that to each real-valued sequence $\xi = (\xi_i)_{i \in \N}$ we assign
\begin{equation}
\label{Equation: Definition of p-norm}
\Phi_p(\xi) :=
    \begin{cases}
        \left(\sum^\infty_{i=1} |\xi_i|^p \right)^{1/p}, & \mbox{if } p < \infty, \\
        \sup_{i \in \N} |\xi_i|, & \mbox{if } p = \infty.
    \end{cases}
\end{equation}
Note that the Banach space $\ell^p(\N) := \ell_{\Phi_p}(\N)$ consists of all those real-valued sequences with finite $p$-norm, and that $\Phi_p$ has the regularity property in the sense of \cref{Lemma: Characteristion of Regularity} (ii). In \cref{Theorem: Metric Space Theorem} we introduce the metric space $\eS_\Phi(X,x_0),$ where $(X,x_0)$ is a based metric space and $\Phi$ is a symmetric norm. We topologise $\eS_\Phi(X,x_0)$ in such a way that each finite symmetric product $\SP^n(X)$ (see \cref{Section: Introduction} for details) can be continuously embedded into $\eS_\Phi(X,x_0)$ in a canonical fashion. The rest of the current section is a summary of  preliminary results for the proceeding sections. The separability and completeness of $\eS_\Phi(X,x_0)$ will be discussed in Appendix \cref{Section: Appendix}. This supplementary material can be read independently from \cref{Section: Continuous Enumeration of Multiset-valued Mappings} and \cref{Section: Flow of Discrete Spectrum}.

\subsection{Symetric norms}
\label{Section: Symmetric Norms}

Here, we briefly recall standard facts about symmetric norms for the reader's convenience (details can be found in \cite[\textsection III.3]{Book:Gohberg-Krein69} and \cite[\textsection 1.1.7]{Book:Simon05}). Let $c_0$ be the set of all real-valued sequences converging to $0,$ and let $c_{00}$ be the set of all real-valued sequences with only finitely many non-zero terms. Evidently, $c_0$ and $c_{00}$ can be both viewed as vector spaces over $\R.$ 

\begin{defn}
Any norm $\Phi$ on $c_{00},$ which assigns to each sequence $\xi = (\xi_i)_{i \in \N}$ in $c_{00}$ a unique non-negative number $\Phi(\xi) = \Phi(\xi_1,\xi_2,\dots),$ is called a \textbi{symmetric norm}, if the following two conditions hold true:
\begin{enumerate}[(i)]
    \item We have $\Phi(1,0,0,\dots) = 1.$
    \item We have $\Phi(\xi_1,\xi_2,\dots) = \Phi(|\xi_{\pi_1}|,|\xi_{\pi_2}|,\dots)$ for any $\xi \in c_{00}$ and any permutation $\pi.$
\end{enumerate}
Given such $\Phi,$ a sequence $\xi \in c_0$ is said to be \textbi{$\Phi$-summable}, if the following limit exists;
\begin{equation}
\label{Equation: Extension of Symmetric Norms}
\Phi(\xi) := \lim_{i \to \infty} \Phi(\xi_1,\dots,\xi_i,0,0,\dots).
\end{equation}
\end{defn}

The set $\ell_\Phi(\R)$ of all $\Phi$-summable sequences forms a Banach space with respect to the norm \cref{Equation: Extension of Symmetric Norms} (see, for example, \cite[Theorem 1.16]{Book:Simon05}). We have the following obvious assertion;

\begin{lem}
\label{Lemma: Characteristion of Regularity}
If $\Phi$ is a symmetric norm, then the following conditions are equivalent:
\begin{enumerate}[(i)]
\item The vector space $c_{00}$ is a dense subspace of $\ell_\Phi(\R).$
\item For each $\xi \in \ell_\Phi(\R),$ we have $\Phi(\xi_{i+1},\xi_{i+2},\dots) \to 0$ as $i \to \infty.$
\item For each $\xi \in \ell_\Phi(\R),$ we have $(\xi_1,\dots,\xi_i,0,0,\dots) \to \xi$ as $i \to \infty.$
\end{enumerate}
\end{lem}
The symmetric norm $\Phi$ is said to be \textbi{regular} (or \textbi{mononormalising} as in \cite[\textsection III.6]{Book:Gohberg-Krein69}), if the above equivalent conditions hold true.  

\begin{eg}
It is shown in \cite[\textsection III.7]{Book:Gohberg-Krein69} that the $p$-norm $\Phi_p$ given by \cref{Equation: Definition of p-norm} is a regular symmetric norm for each $p \in [1,\infty].$ On the other hand, the so-called \textbi{Calderon norms} are examples of symmetric norms that are not regular (see, for example, \cite[\textsection 1.1.7]{Book:Simon05}). 
\end{eg}

\begin{lem}[{\cite[Lemma III.3.1]{Book:Gohberg-Krein69}}]
\label{Lemma: Majorisation}
Let $\Phi$ be a symmetric norm, and let $\xi,\eta \in \ell_{\Phi}(\R)$ satisfy $|\xi_i| \geq |\xi_{i+1}|$ and $|\eta_i| \geq |\eta_{i+1}|$ for each $i \in \N.$ If $\sum^k_{i=1} |\xi_i| \leq \sum^k_{i=1} |\eta_i|$ for all $k \in \N,$ then $\Phi(\xi) \leq \Phi(\eta).$
\end{lem}

We shall make use of the following corollary throughout this paper;
\begin{cor}
\label{Corollary: Basic Properties of Symmetric Norms}
Let $\Phi$ be a symmetric norm, and let $\xi,\eta \in \ell_\Phi(\R).$ Then the following assertions hold true:
\begin{enumerate}[(i)]
\item We have $\Phi(\xi_1,\xi_2,\dots) = \Phi(|\xi_{\pi_1}|,|\xi_{\pi_2}|,\dots)$ for any permutation $\pi.$ 
\item If $|\xi_i| \leq |\eta_i|$ for each $i \in \N,$ then $\Phi(\xi) \leq \Phi(\eta).$
\item We have $\sup_{i \in \N} |\xi_i| \leq \Phi(\xi) \leq \sum^\infty_{i=1} |\xi_i|.$
\end{enumerate}
\end{cor}
Note that the last assertion implies $\ell^1(\R) \subseteq \ell_\Phi(\R) \subseteq \ell^\infty(\R).$
\begin{proof}
For the first assertion, note that $\Phi(\xi) = \Phi(|\xi|)$ immediately follows from the fact that $\Phi$ is symmetric, and so it is sufficient to show that $\Phi(\xi) = \Phi(\xi_{\pi})$ for each permutation $\pi.$ There exists an increasing sequence $(N_n)_{n \in \N}$ of natural numbers, such that $\xi_{\pi_1}, \dots, \xi_{\pi_n}$ are among $\xi_{1}, \dots, \xi_{{N_n}}.$ It follows from \cref{Lemma: Majorisation} that 
\[
\Phi(\xi_{\pi_1}, \dots, \xi_{\pi_n},0,0,\dots) \leq \Phi(\xi_{1}, \dots, \xi_{{N_n}},0,0,\dots), \qquad n \in \N.
\]
Taking the limit as $n \to \infty$ gives $\Phi(\xi_\pi) \leq \Phi(\xi).$ Since $\pi$ was chosen arbitrarily, the claim follows (observe $\xi = \xi_{\pi \circ \pi^{-1}}$). The remaining assertions are also easy consequences of the same lemma; see \cite[\textsection III.3]{Book:Gohberg-Krein69} for details.
\end{proof}

\subsection{Countable multisets}
\label{Section: Countable Multisets}
Let $X$ be a set with a basepoint $x_0 \in X.$ A \textbi{multisubset} of $X$ is understood naively as a subset of $X,$ whose elements can be repeated more than once. For instance, the multisubset $\{x,x\}^*,$ where we use notation $\{\dots\}^*$ to distinguish it from ordinary subsets of $X,$ is considered to be  different from $\{x\}^*.$ Let
\begin{equation}
\label{Equation: Definition of Zero Multiset}
O_{x_0} := \{x_0,x_0,x_0,\dots\}^*,
\end{equation}
where $x_0$ is repeated infinitely many times. More precisely,

\begin{defn}
A \textbi{multisubset} of $X$ is any mapping of the form $S : X \to \{0,1,2,\dots,\infty\}$ assigning to each point $x \in X$ a unique non-negative integer or infinity, $S(x),$ which is defined to be the \textbi{multiplicity} of $x$ in $S.$
\end{defn}

A \textbi{countable multisubset of $(X,x_0)$} is any multisubset $S$ of $X,$ such that the basepoint $x_0$ is the only point in $S$ having infinite multiplicity, and the \textbi{support} of $S$ defined as follows is countable;
\begin{equation}
\supp S := \{x \in X \mid S(x) > 0\}.
\end{equation}
The \textbi{rank} of $S,$ denoted by $\rank S,$ is the sum of the multiplicities of all points in $\supp S$ except the basepoint $x_0.$ We call $S$ a \textbi{finite-rank multisubset}, if $\rank S < \infty.$ Evidently, the multisubset $O_{x_0}$ given by \cref{Equation: Definition of Zero Multiset} is the only finite-rank multisubset of $(X,x_0)$ with zero rank. Given a subset $U$ of $X,$ we agree to write $S \subseteq U,$ if $\supp S \subseteq U.$ The notation $x \in S,$ where $x \in X,$ is understood likewise as $x \in \supp S.$

We will only consider countable multisubsets of $(X,x_0)$ from here on, and so the following convention makes sense. Whenever we are given a multisubset $\{s_1,s_2,\dots\}^*$ of $X,$ where the sequence $(s_1,s_2, \dots)$ may be finite or infinite, we shall \textit{always} assume that $\{s_1,s_2,\dots\}^*$ contains the basepoint $x_0$ infinitely many times. With this convention in mind, we introduce the following terminology;

\begin{defn}
Let $S$ be a countable multisubset of $(X,x_0).$ A sequence $(s_i)_{i \in \N}$ is called an \textbi{enumeration} of $S,$ if $S = \{s_1,s_2,\dots\}^*.$ If the given enumeration $(s_i)_{i \in \N}$ contains the basepoint $x_0$ infinitely many times, then it is called a \textbi{proper enumeration} of $S.$
\end{defn}

\begin{rem}
\label{Remark: Proper Enumeration}
Let $S$ be a countable multisubset of $(X,x_0).$ Any two proper enumerations of $S$ are identical up to a permutation. Furthermore, given an enumeration $(s_i)_{i \in \N}$ of $S,$ the sequence $(s_1,x_0,s_2,x_0,\dots)$ is a proper enumeration of $S.$
\end{rem}

Given two countable multisubsets $S,T$ of $(X,x_0),$ we agree to write $T \subseteq S,$ if $T(x) \leq S(x)$ for all $x \in X.$ We define the \textbi{sum} $S + T,$ and \textbi{difference} $S - T$ in case $T
\leq S,$ by
\[
(S \pm T)(x) =
    \begin{cases}
    \infty, & \mbox{if $x = x_0,$}\\
    S(x) \pm T(x), & \mbox{otherwise.}
    \end{cases}
\]
Given a countable multisubset $S$ of $(X,x_0)$ and an arbitrary subset $U$ of $X,$ we define their \textbi{intersection}, denoted by $S \cap U,$ as follows;
\[
(S \cap U)(x) :=
    \begin{cases}
        \infty, & \mbox{if $x = x_0,$} \\
        S(x), & \mbox{if $x \neq x_0$ and $x \in U,$}\\
        0, & \mbox{if $x \neq x_0$ and $x \notin U.$}
    \end{cases}
\]
Note that the multiplicity of the basepoint $x_0$ in $S \cap U$ is defined to be infinite, even if the basepoint $x_0$ does not belong to the set $U.$ Thus, we can always view the intersection $S \cap U$ as a countable multisubset of $(X,x_0).$ We also define
$
S \setminus U := S \cap (X \setminus U).
$

\subsection{Summable multisets}
\begin{notation}
Throughout the remaining part of the current section, let $\Phi$ be a symmetric norm, and let $(X,x_0)$ be a based metric space endowed with a fixed metric $d.$ Let $B_\epsilon(x) := \{x' \in X \mid d(x,x') < \epsilon\}$ be the open $\epsilon$-neighborhood of a fixed point $x \in X.$ 
\end{notation}

Given two countable multisubsets $S,T$ of $(X,x_0),$ we define their \textbi{$\Phi$-distance} by
\begin{equation}
\label{Equation: Phi-distance for Multisets}
d_\Phi(S,T) := \inf \; \Phi(d(s_1,t_1),d(s_2,t_2),\dots),
\end{equation}
where the infimum is taken over all pairs $(s_i)_{i \in \N},(t_i)_{i \in \N}$ of enumerations of $S,T$ respectively. In fact, we may assume without loss of generality that the infimum in \cref{Equation: Phi-distance for Multisets} is taken over all \textbi{proper} enumerations $(s_i)_{i \in \N},(t_i)_{i \in \N}$ of $S,T$ respectively (\cref{Remark: Proper Enumeration}).

\begin{pro}
\label{Theorem: Metric Space Theorem}
The set $\eS_\Phi(X,x_0)$ of all those countable multisubsets $S$ of $(X,x_0)$ with $d_\Phi(O_{x_0}, S) < \infty$ forms a metric space with respect to the distance function \cref{Equation: Phi-distance for Multisets}.
\end{pro}
The members of the metric space $\eS_\Phi(X,x_0)$ are said to be \textbi{$\Phi$-summable}. Note that $S \in \eS_\Phi(X,x_0)$ if and only if given any enumeration $(s_i)_{i \in \N}$ of $S$ we have $(d(x_0,s_i))_{i \in \N} \in \ell_\Phi(\R).$ To prove \cref{Theorem: Metric Space Theorem} let us first prove the following lemma;

\begin{lem}
\label{Lemma: Metric Space Remark}
If $S,T$ are multisubsets in $\eS_\Phi(X,x_0)$ admitting enumerations $(s_i)_{i \in \N},(t_i)_{i \in \N}$ respectively, then the following assertions hold true:
\begin{enumerate}[(i)]
\item The support of $S$ is a compact set, and it can have only one accumulation point $x_0.$
\item We have
\begin{equation}
\label{Equation: Between Trace Norm and Sup Norm}
\sup_{i \in \N} d(s_i,t_i) \leq \Phi(d(s_1,t_1),d(s_2,t_2),\dots) \leq \sum^\infty_{i=1} d(s_i, t_i).
\end{equation}
\item We have $\Phi(d(x_0,s_{i+1}),d(x_0,s_{i+2}),\dots) \to 0$ as $i \to \infty,$ provided that the given symmetric norm $\Phi$ is regular in the sense of \cref{Lemma: Characteristion of Regularity}.
\end{enumerate}
\end{lem}
\begin{proof}
The first assertion follows from the fact that $\supp S$ is the closure of the image of a sequence converging to $x_0.$ Since $(d(s_i,t_i))_{i \in \N} \in \ell_\Phi(\R),$ the remaining assertions follow from \cref{Corollary: Basic Properties of Symmetric Norms} (iii) and \cref{Lemma: Characteristion of Regularity} (ii).
\end{proof}

\begin{proof}[Proof of \cref{Theorem: Metric Space Theorem}]
Let $S,T,U \in \eS_\Phi(X,x_0).$ We shall make use of the first two assertions in \cref{Lemma: Metric Space Remark}. Note that $d_\Phi(S,T) = d_\Phi(T,S)$ and $d_\Phi(S,S) = 0$ are obvious. As for non-degeneracy, we assume the contrary that $d_\Phi(S,T) = 0$ with $S \neq T.$ Without loss of generality, we may assume that there exists a point $x' \neq x_0,$ such that $S(x') < T(x').$ Since $x'$ cannot be an accumulation point of $\supp S,$ we can choose a small enough open $\epsilon$-ball $B_\epsilon(x')$ around $x',$ such that $B_\epsilon(x') \cap \supp S$ is either the empty set $\emptyset$ or the singleton $\{x'\}.$  In either case, this leads to a contradiction $d_\Phi(S,T) \geq \epsilon > 0.$

To prove the triangle inequality $d_\Phi(S,T) \leq d_\Phi(S,U) + d_\Phi(U,T),$ we let $(s_i)_{i \in \N},(t_i)_{i \in \N}$ be proper enumerations of $S,T$ respectively, and let $(u_i)_{i \in \N},(u'_i)_{i \in \N}$ be two proper enumerations of $U.$ Then there exists a permutation $\pi$ satisfying $u'_{\pi_i} = u_i$ for each $i \in \N.$
\begin{align*}
\Phi[(d(s_i,u_i))_{i \in \N}] + \Phi[(d(u'_i,t_i))_{i \in \N}] 
    &= \Phi[(d(s_i,u_i))_{i \in \N}] + \Phi[(d(u_i,t_{\pi_i}))_{i \in \N}] \\
    &\geq \Phi[(d(s_i,u_i))_{i \in \N} + (d(u_i,t_{\pi_i}))_{i \in \N}] \\
    & \geq \Phi[(d(s_i,t_{\pi_i}))_{i \in \N}] \\
    &\geq d_\Phi(S,T).
\end{align*}
Since all the proper enumerations $(s_i)_{i \in \N},(t_i)_{i \in \N},(u_i)_{i \in \N},(u'_i)_{i \in \N}$ were chosen arbitrarily, taking the infimum over these sequences establishes the triangle inequality. In particular, selecting $U := O_{x_0}$ ensures $d_\Phi(S,T) < \infty$ for all $S,T \in \eS_\Phi(X,x_0).$
\end{proof}

\begin{rem}
We let $\eS_p(X,x_0) := \eS_{\Phi_p}(X,x_0)$ and $d_p := d_{\Phi_p}$ for each fixed $p \in [0,\infty],$ where the $p$-norm $\Phi_p$ is given by \cref{Equation: Definition of p-norm}. It immediately follows from \cref{Equation: Between Trace Norm and Sup Norm} that we have
\begin{equation}
\label{Equation: Between Trace Multiset and Sup Multiset}
\eS_1(X,x_0) \subseteq \eS_\Phi(X,x_0) \subseteq  \eS_\infty(X,x_0),
\end{equation}
where each inclusion is $1$-Lipschitz continuous.
\end{rem}

\subsection{Multiset functors}
\label{Section: Multiset functors}
Let $(Y,y_0)$ be another based metric space, and let $f : (X,x_0) \to (Y,y_0)$ be a based $L$-Lipschitz continuous mapping. That is, $f$ is an  $L$-Lipschitz continuous mapping of the form $f : X \to Y$ with $f(x_0) = y_0.$ It is easy to see that $f$ naturally induces the based $L$-Lipschitz mapping $f_* : (\eS_\Phi(X,x_0),O_{x_0}) \to (\eS_\Phi(Y,y_0),O_{y_0})$ defined by
\begin{equation}
\label{Equation: Induced Lipschitz}
\eS_\Phi(X,x_0) \ni \{s_1,s_2,\dots\}^* \longmapsto \{f(s_1),f(s_2),\dots\}^* \in \eS_\Phi(Y,y_0).
\end{equation}

More precisely, we consider the category $\Lip$ consisting of based metric spaces as objects and based Lipschitz mappings as morphisms. For each fixed symmetric norm $\Phi,$ the \textbi{$\Phi$-multiset functor} assigns to each based metric space $(X,x_0)$ a new based metric space $(\eS_\Phi(X,x_0),O_{x_0})$ and to each morphism $f : (X,x_0) \to (Y,y_0)$ a new morphism $f_* : (\eS_\Phi(X,x_0),O_{x_0}) \to (\eS_\Phi(Y,y_0),O_{y_0}).$ This defines a covariant functor of the form $\Lip \to \Lip.$

\subsection{Metric inequalities}
The purpose of the current subsection is to introduce certain inequalities about $d_\Phi,$ which will be used throughout this paper.
\subsubsection{Inequalities involving sum and difference}

\begin{lem}\label{Sum Inequality}
We have $d_\Phi(S + S', T + T') \leq d_\Phi(S,T) + d_\Phi(S',T')$ for all
$S,S',T,T' \in \eS_\Phi(X,x_0).$
\end{lem}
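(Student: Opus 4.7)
The plan is to produce near-optimal enumerations of $S,T$ and $S',T'$ separately, interleave them to get enumerations of $S+S'$ and $T+T'$, and bound the $\Phi$-norm of the interleaved distance sequence by the sum of the two individual $\Phi$-norms.

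First I would fix $\epsilon > 0$ and, by the definition of $d_\Phi$ together with Remark \ref{Proper Enumeration}, choose proper enumerations $(s_i)_{i\in\N},(t_i)_{i\in\N}$ of $S,T$ with
$$\Phi(d(s_1,t_1),d(s_2,t_2),\ldots) < d_\Phi(S,T) + \tfrac{\epsilon}{2},$$
and similarly proper enumerations $(s'_i)_{i\in\N},(t'_i)_{i\in\N}$ of $S',T'$ whose corresponding distance sequence is within $\epsilon/2$ of $d_\Phi(S',T')$. The interleaved sequences $(s_1,s'_1,s_2,s'_2,\ldots)$ and $(t_1,t'_1,t_2,t'_2,\ldots)$ are then proper enumerations of $S+S'$ and $T+T'$ respectively, since the original enumerations each contain $x_0$ infinitely often and the sum of multisets assigns infinite multiplicity at $x_0$.

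The main step is a general inequality for symmetric norms: for any nonnegative sequences $a,b \in \ell_\Phi^+$, the interleaved sequence $c := (a_1,b_1,a_2,b_2,\ldots)$ satisfies
$$\Phi(c) \leq \Phi(a) + \Phi(b).$$
To see this I would decompose $c = a' + b'$, where $a' := (a_1,0,a_2,0,\ldots)$ and $b' := (0,b_1,0,b_2,\ldots)$, and then invoke the symmetry axiom in Definition \ref{Definition of Symmetric Norm}: on each finite truncation, $a'$ is a permutation of a finite truncation of $a$ with trailing zeros adjoined, so $\Phi(a') = \Phi(a)$, and likewise $\Phi(b') = \Phi(b)$. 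The claim then follows from the triangle inequality for $\Phi$. This is the only nontrivial step, and the zero-padding device is what makes it immediate.

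Applying this with $a_i := d(s_i,t_i)$ and $b_i := d(s'_i,t'_i)$, the distance sequence produced by the interleaved enumerations of $S+S'$ and $T+T'$ is precisely $c$, so
$$d_\Phi(S+S',T+T') \leq \Phi(c) \leq \Phi(a) + \Phi(b) < d_\Phi(S,T) + d_\Phi(S',T') + \epsilon.$$
Letting $\epsilon \to 0$ yields the lemma; as a by-product one also sees that $S+S' \in \Sp$ (take $T=T'=O_{x_0}$), so the left-hand side is well-defined.
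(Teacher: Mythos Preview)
Your proof is correct and follows essentially the same approach as the paper: interleave enumerations of $S,S'$ and $T,T'$, then use the zero-padding identity $\Phi(a_1,0,a_2,0,\ldots)=\Phi(a_1,a_2,\ldots)$ together with the triangle inequality for $\Phi$ to bound the interleaved distance sequence. The only cosmetic difference is that you fix $\epsilon>0$ and choose near-optimal enumerations at the outset, whereas the paper works with arbitrary enumerations and takes the infimum at the end.
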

\begin{proof}
Let $(s_i)_{i \in \N},(s'_i)_{i \in \N},(t_i)_{i \in \N},(t'_i)_{i \in \N}$ be enumerations of $S,S',T,T'$
respectively. Since
$(s_1,s'_1,s_2,s'_2,s_3,s'_3,\dots)$ and $(t_1,t'_1,t_2,t'_2,t_3,
t'_3, \dots)$
are enumerations of $S + S', T + T'$ respectively, we have
\begin{align*}
\Phi[(d(s_i,t_i))_{i \in \N}] &+ \Phi[(d(s'_i,t'_i))_{i \in \N}] \\
&= \Phi(d(s_1,t_1),0,d(s_2,t_2),0,\dots) + \Phi(0,d(s'_1,t'_1),0,d(s'_2,t'_2),\dots) \\
&\geq \Phi(d(s_1,t_1),d(s'_1,t'_1),d(s_2,t_2),d(s'_2,t'_2),\dots) \\
&\geq d_\Phi(S + S',T + T'),
\end{align*}
where the first inequality follows from the triangle inequality with respect to $\Phi.$ Since the enumerations $(s_i)_{i \in \N},(s'_i)_{i \in \N},(t_i)_{i \in \N},(t'_i)_{i \in \N}$ were chosen arbitrarily, taking the infimum over these enumerations establishes the claim.
\end{proof}

An analogous inequality $d_\Phi(S - S', T - T') \leq d_\Phi(S,T) + d_\Phi(S',T'),$ where $S,S',T,T' \in  \eS_\Phi(X,x_0)$ satisfy $S' \subseteq S$ and $T' \subseteq T,$ fails to hold in general as in the following example;

\begin{eg}
Let $N > 1$ be a fixed natural number. Here, we consider the space $\eS_2(\R_+,0),$ where the set $\R_+$ of non-negative real numbers is equipped with the standard metric $\rho(x,y) := |x - y|.$ We define finite-rank multisubsets $S,S',T,T'$ in $\eS_2(\R_+,0)$ by
\[
S = S' = T = \left\{\frac{1}{N},\frac{2}{N},\dots,1\right\}^*,  \qquad T'= T - \{1\}^* = \left\{\frac{1}{N},\frac{2}{N},\dots,\frac{N-1}{N}\right\}^*.
\]
Since
$\left(\frac{1}{N},\frac{2}{N},\dots,1, 0, 0, 0,\dots\right),
\left(0,\frac{1}{N},\frac{2}{N},\dots,\frac{N-1}{N}, 0, 0,0,
\dots\right)$ are enumerations of $S',T'$ respectively,
\[
\rho_2(S,T) + \rho_2(S',T')
    \leq 0 + \left(\left|\frac{1}{N} - 0\right|^2 + \left|\frac{2}{N} - \frac{1}{N}\right|^2 + \dots + \left|1 - \frac{N-1}{N} \right|^2\right)^{\frac{1}{2}} 
    \leq \frac{1}{\sqrt{N}} < 1.
\]
On the other hand, since $S  -S' = O_0$ and $T - T' = \{1\}^*,$ we have $\rho_2(S-S',T-T') = \rho_2(O_0,\{1\}^*) = 1.$ That is, $d_\Phi(S - S', T - T') \leq d_\Phi(S,T) + d_\Phi(S',T')$ does not hold true in general.
\end{eg}

Nevertheless, the following weaker version turns out to be sufficient;

\begin{lem}\label{Difference Inequality 2}
Let $S,S',T,T' \in \eS_\Phi(X,x_0)$ be multisubsets satisfying $S' \subseteq S$
and $T' \subseteq T.$ If $S',T'$ are finite-rank multisubsets and if $n = \rank (S' + T'),$ then
\[
d_\Phi(S-S',T-T') \leq2^n \left(d_\Phi(S,T) +
d_\Phi(S',T')\right).
\]

\end{lem}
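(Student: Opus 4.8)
The plan is to build an enumeration pairing of $A:=S-S'$ with $B:=T-T'$ by patching together a near‑optimal pairing of $S$ with $T$ and a near‑optimal pairing of $S'$ with $T'$, and then to estimate the cost of the result combinatorially. Fix $\epsilon>0$. First I would choose proper enumerations $(s_i)_{i\in\N},(t_i)_{i\in\N}$ of $S,T$ with $\Phi\big((d(s_i,t_i))_{i\in\N}\big)<d_\Phi(S,T)+\epsilon=:\mu$, and proper enumerations $(s'_i)_{i\in\N},(t'_i)_{i\in\N}$ of $S',T'$ with $\Phi\big((d(s'_i,t'_i))_{i\in\N}\big)<d_\Phi(S',T')+\epsilon=:\nu$; since $n_1:=\rank S'$ and $n_2:=\rank T'$ are finite, all but $n_1$ of the $s'_i$ and all but $n_2$ of the $t'_i$ are equal to $x_0$, and $n:=n_1+n_2$. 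Because $S'\subseteq S$ and $T'\subseteq T$, I would fix injections $\phi,\psi\colon\N\to\N$ with $s_{\phi(i)}=s'_i$ and $t_{\psi(i)}=t'_i$ (so $d(s_{\phi(i)},t_{\psi(i)})=d(s'_i,t'_i)$), chosen so that each index $i$ with $s'_i=x_0$, resp.\ $t'_i=x_0$, is sent to a basepoint index of $(s_i)$, resp.\ $(t_i)$.

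Next I would form an auxiliary graph $G$ on formal ``$S$-slots'' $\sigma_1,\sigma_2,\dots$ and ``$T$-slots'' $\tau_1,\tau_2,\dots$: for each $i$ a \emph{diagonal} edge joining $\sigma_i,\tau_i$, of length $d(s_i,t_i)$, and for each of the at most $n$ indices $i$ with $s'_i\neq x_0$ or $t'_i\neq x_0$ a \emph{removal} edge joining $\sigma_{\phi(i)},\tau_{\psi(i)}$, of length $d(s'_i,t'_i)$. Every slot lies on exactly one diagonal edge and, since $\phi,\psi$ are injective, on at most one removal edge, so $G$ has maximum degree two; hence every component is a finite path or a cycle, and since every edge joins an $S$-slot to a $T$-slot the edge types alternate diagonal/removal along each path and cycle and the vertices alternate sides. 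A component is a single diagonal edge unless it contains a removal edge, so only finitely many components are non‑trivial, and since a degree‑one vertex lies on a diagonal edge, each non‑trivial path is attached at both ends by diagonal edges and therefore has odd length; a short count using that there are at most $n$ removal edges then shows the non‑trivial components together involve only $O(n)$ edges, and the two endpoints of each odd‑length path lie on opposite sides.

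Deleting from $G$ the slots that are genuinely removed in passing to $A$ and to $B$ — the non‑basepoint $\sigma$- and $\tau$-slots meeting a removal edge — breaks each component into sub‑paths whose endpoints are surviving slots; the distance between the two endpoints of a sub‑path is at most the sum of the lengths of its edges, by the triangle inequality for $d$ along the sub‑path. Pairing the surviving diagonal pairs $(s_i,t_i)$ of trivial components, pairing the two endpoints of each sub‑path, and dealing with the (at most $n$) residual one‑sided imbalances by matching an orphaned slot $t_i$ against a basepoint of $A$ via $d(x_0,t_i)\le d(x_0,s_i)+d(s_i,t_i)$ and its mirror, produces proper enumerations $(\tilde s_k)_{k\in\N}$ of $A$ and $(\tilde t_k)_{k\in\N}$ of $B$ in which each $d(\tilde s_k,\tilde t_k)$ is either a single term $d(s_i,t_i)$ of the first sequence, or a sum of $O(n)$ lengths drawn from the sequences $(d(s_i,t_i))_{i\in\N}$ and $(d(s'_i,t'_i))_{i\in\N}$, all these summands being pairwise distinct and there being only $O(n)$ coordinates $k$ of the second kind. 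Splitting $(d(\tilde s_k,\tilde t_k))_{k\in\N}$ into its surviving‑diagonal part — a subsequence of $(d(s_i,t_i))_{i\in\N}$, hence of $\Phi$-norm at most $\mu$ by monotonicity — and its remaining part, and bounding the latter with the triangle inequality for $\Phi$, monotonicity, the estimate $\sup_i d(s_i,t_i)\le\mu$ from (\ref{Supremum Phi Inequality}), and the elementary inequalities $\Phi(a_1+b_1,a_2+b_2,\dots)\le\Phi(a)+\Phi(b)$ and $\Phi(a_1,b_1,a_2,b_2,\dots)\le\Phi(a)+\Phi(b)$ recalled in \S\ref{Preliminaries}, one obtains $\Phi\big((d(\tilde s_k,\tilde t_k))_{k\in\N}\big)\le (n+2)\mu+n\nu\le 3n(\mu+\nu)$ (the crucial point being to keep the surviving‑diagonal contribution \emph{unmultiplied}). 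Since $d_\Phi(A,B)\le\Phi\big((d(\tilde s_k,\tilde t_k))_{k\in\N}\big)$ and $\epsilon>0$ was arbitrary, the stated inequality follows, the case $n=0$ being excluded.

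The main obstacle is the bookkeeping forced by the basepoint $x_0$: it has infinite multiplicity, so a basepoint slot hit by $\phi$ or $\psi$ is never genuinely deleted, the internal vertices of a non‑trivial path need not all be removed, and the numbers of deleted slots on the two sides need not agree — this is exactly what necessitates the sub‑path decomposition, the orphan‑to‑basepoint device, and the careful choice of $\phi,\psi$ in the first place. The two conceptually load‑bearing points are the maximum‑degree‑two structure of $G$ (giving the path/cycle decomposition and the triangle‑inequality patching) and the parity fact that makes the re‑matched endpoints of each sub‑path lie one on the $S$-side and one on the $T$-side; the rest is the arithmetic needed to bring the constant down to exactly $3n$.
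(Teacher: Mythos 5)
Your route is genuinely different from the paper's. The paper never constructs an explicit matching between $S-S'$ and $T-T'$: it first proves a cancellation lemma, $d_\Phi(S,T)\le 3n\,d_\Phi(S+U,T+U)$ for $\rank U = n<\infty$ (Lemma \ref{Difference Inequality 1}), by induction on $\rank U$ --- the rank-one case being a single simultaneous renumbering that puts the two extra copies of $u$ in adjacent slots, followed by three applications of the triangle inequality, which is where the factor $3$ comes from --- and then deduces the statement in three lines by adding $S'+T'$ back to both $S-S'$ and $T-T'$, observing that this produces $S+T'$ and $T+S'$, and invoking the sum inequality (Lemma \ref{Sum Inequality}). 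Your transport-style argument does all of this cancellation by hand.

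The skeleton of your construction (maximum degree two, hence a decomposition into paths and cycles with alternating diagonal and removal edges; triangle inequality along alternating segments; the closing $(n+2)\mu+n\nu$ arithmetic) is sound, but the two steps you wave at are exactly where the difficulty lives, and as written they do not close. First, after deleting the removed slots the two endpoints of a sub-path need \emph{not} lie on opposite sides: your parity fact applies to the original components (whose degree-one vertices are joined by an odd number of edges), not to the pieces left after removing an arbitrary subset of internal vertices, since the gap between consecutive deleted vertices can be even. Second, the resulting imbalances are fed into an orphan bound $d(x_0,t_i)\le d(x_0,s_i)+d(s_i,t_i)$ that is only useful when $s_i=x_0$; if the orphan's diagonal partner is itself a deleted slot, $d(x_0,s_i)$ is not controlled by $d_\Phi(S,T)$ or $d_\Phi(S',T')$. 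Both problems evaporate once you record the observation your sketch omits: a slot incident to a removal edge that \emph{survives} the deletion necessarily carries the value $x_0$ (it is $\sigma_{\phi(i)}$ with $s'_i=x_0$, or $\tau_{\psi(i)}$ with $t'_i=x_0$). Hence in each non-trivial path component the only survivors with non-basepoint values are the two degree-one endpoints, which genuinely do lie on opposite sides; match those two along the entire component (at most $n$ removal and $n+1$ diagonal edges), send every other survivor to a basepoint at zero cost, and drop the sub-path decomposition and the orphan device altogether. With that repair your bound $(n+2)\mu+n\nu\le 3n(\mu+\nu)$ for $n\ge1$ is correct, though considerably longer than the paper's argument.
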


This result will be proved with the aid of the following lemma;
\begin{lem}\label{Lemma1: Difference Inequality}
If $S,T,U \in \eS_\Phi(X,x_0)$ and if $n = \rank U < \infty,$ then
\[
d_\Phi(S,T) \leq 2^n d_\Phi(S + U,T + U).
\]
\end{lem}
\begin{proof}
Let us first prove the claim for $U = \{u\}^*.$ Let $(s'_i)_{i \in \N},(t'_i)_{i \in \N}$ be enumerations of $S + U,T + U$ respectively, such that $s'_{i_0} = u$ and $t'_{j_0} = u$ for some $i_0, j_0 \in \N.$ Note first that if $i_0 = j_0,$ then $d_\Phi(S,T) \leq  \Phi[(d(s'_i,t'_i))_{i \in \N}] \leq 2 \Phi[(d(s'_i,t'_i))_{i \in \N}]$ trivially holds true. On the other hand, if $i_0 \neq j_0,$ we can then simultaneously renumber $(s'_i)_{i \in \N},(t'_i)_{i \in \N},$ so that
$(s'_i)_{i \in \N} = (s_1,u,s_2,s_3,\dots)$ and $(t'_i)_{i \in \N} = (u,t_1,t_2,t_3,\dots)$ for some enumerations $(s_i)_{i \in \N},(t_i)_{i \in \N}$ of $S,T$ respectively. It follows that
\begin{align*}
d_\Phi(S,T)
& \leq \Phi(d(s_1,t_1),d(s_2,t_2), \dots) \\
& \leq \Phi(d(s_1,u) + d(u,t_1), d(s_2,t_2), d(s_3,t_3), \dots) \\
& \leq \Phi(d(s_1,u) + d(u,t_1), 2d(s_2,t_2), 2d(s_3,t_3),\dots) \\
& \leq \Phi(d(s_1,u), d(s_2,t_2), \dots) + \Phi(d(u,t_1), d(s_2,t_2), \dots) \\
& \leq 2\Phi(d(s_1,u), d(u,t_1), d(s_2,t_2), d(s_3,t_3), \dots) \\
& \leq 2\Phi(d(s'_1,t'_1), d(s'_2,t'_2),  d(s'_3,t'_3), \dots).
\end{align*}
Taking the infimum over $(s'_i)_{i \in \N}, (t'_i)_{i \in \N}$  establishes the claim for $U = \{u\}^*.$ For the general case, suppose $U = \{u_1,\dots,u_n\}^*.$ It follows that
\[
2^n  d_\Phi(S + U,T + U) \geq 2^{n-1}  d_\Phi(S + \{u_1,\dots,u_{n-1}\}^*,T + \{u_1,\dots,u_{n-1}\}^* ).
\]
Continuing this way, we obtain the claim.
\end{proof}

\begin{proof}[Proof of \cref{Difference Inequality 2}]
Since $\rank(S' + T') = n,$ we get
\begin{align*}
d_\Phi(S-S',T-T')
    &\leq 2^n  d_\Phi(S-S'+(S'+T'),T - T' + (S'+T')) \\
    &\leq 2^n  d_\Phi(S + T',T + S') \\
    &\leq 2^n \left(d_\Phi(S,T) + d_\Phi(S',T')\right),
\end{align*}
where the first inequality follows from \cref{Lemma1: Difference Inequality} and the last inequality follows from \cref{Sum Inequality}. The proof is complete.
\end{proof}

\subsubsection{Inequalities involving finite-rank multisets}

\begin{lem}
Given $s_0,s_1,\dots,s_n \in X$ and $t_1,\dots,t_n \in X,$ we have
\begin{align}
\label{Equation: Finite Sum} &d_\Phi(\{s_1,\dots,s_n\}^*,\{t_1,\dots,t_n\}^*) \leq \sum^n_{i=1} d(s_i,t_i), \\
\label{Equation: Phi-estimate} &\sup_{1 \leq i \leq n} d(s_0,s_i)
    \leq 2\, d_\Phi(\{\underbrace{s_0,\dots,s_0}_{\mbox{$n$ times}}\}^*,\{s_1,\dots,s_n\}^*).
\end{align}
\end{lem}
\begin{proof}
Inequality \cref{Equation: Finite Sum} follows from \cref{Sum Inequality} and the second inequality in \cref{Equation: Between Trace Norm and Sup Norm};
\[
d_\Phi(\{s_1,\dots,s_n\}^*,\{t_1,\dots,t_n\}^*)
    \leq d_\Phi(\{s_1\}^*,\{t_1\}^*) + \dots + d_\Phi(\{s_n\}^*,\{t_n\}^*) \\
    \leq \sum^n_{i=1} d(s_i,t_i).
\]
Inequality \cref{Equation: Phi-estimate} follows from the first inequality in \cref{Equation: Between Trace Norm and Sup Norm} and the triangle inequality with respect to $d.$
\end{proof}

\subsubsection{Inequalities involving intersection}

Given $S,T \in \eS_\Phi(X,x_0)$ and a subset $U$ of $X,$ the following inequality does not hold true in general;
\begin{equation}
\label{Intersection Estimate}
d_\Phi(S \cap U, T \cap U) \leq d_\Phi(S, T).
\end{equation}
Here, we establish a criterion under which estimate \cref{Intersection Estimate} holds true.

\begin{defn}
\label{Definition: Definition of Positively Separated Tuples}
We introduce the following terminology:
\begin{enumerate}[(i)]
\item Given a subset $U$ of $X,$ we let $\eS_\Phi^U(X,x_0) := \{S \in \eS_\Phi(X,x_0) \mid S \subseteq U\}.$
\item A finite tuple $(U_0, \dots, U_k)$ of non-empty subsets of $X$ is \textbi{positively separated}, if
\begin{align}
&x_0 \in U_0, \\
&\dist(U_i, U_j) := \inf_{(u_i,u_j) \in U_i \times U_j} d(u_i,u_j) > 0, \qquad i \neq j.
\end{align}
\end{enumerate}
The positive number $\delta := \min_{i \neq j} \dist(U_i,U_j)$ is called the \textbi{separation} of $(U_0, \dots, U_k).$
\end{defn}

\begin{lem}
\label{Lemma: Intersection Inequality}
Let $(U_0,\dots,U_k)$ be a positively separated tuple of subsets of $X$ with separation $\delta > 0.$ If $U = \bigcup_{j=0}^k U_j$ and if $S,T \in \eS_\Phi^U(X,x_0)$ with
$d_\Phi(S,T) < \delta,$ then:
\begin{enumerate}[(i)]
\item We have $d_\Phi(S \cap U_j, T \cap U_j) \leq d_\Phi(S,T)$ for all $j= 0, \dots, k.$
\item We have $\rank (S \cap U_j) = \rank (T \cap U_j)$ for all $j = 1, \dots, k.$
\end{enumerate}
\end{lem}
Note that $\rank (S \cap U_0) = \rank (T \cap U_0)$ does not hold true  in general.
\begin{proof}
Suppose that $S,T \in \eS_\Phi^U(X,x_0)$ satisfy $d_\Phi(S,T) < \delta,$ and that $(s_i)_{i \in \N},(t_i)_{i \in \N}$ are arbitrary enumerations of $S,T$ respectively. Without loss of generality, we may assume that $\Phi[(d(s_i,t_i))_{i \in \N}] < \delta.$ It follows from the first inequality in \cref{Equation: Between Trace Norm and Sup Norm} that
\begin{equation}
\label{Equation : Intersection Inequality}
\sup_{i \in \N} d(s_i,t_i) < \delta.
\end{equation}
It follows that each set $U_j$ has the property that $s_i \in U_j$ if and only if  $t_i \in U_j$ for all $i \in \N.$ The second assertion follows. The first assertion follows from $d_\Phi(S \cap U_j, T \cap U_j) \leq \Phi[(d(d_i,t_i))_{i \in \N}]$ for each $j = 0, \dots, n.$
\end{proof}

\begin{cor}
\label{Corolary: Intersection Continuity}
If $(U_0, \dots, U_k)$ is a positively separated tuple of subsets of $X$ and if $U = \bigcup_{j=0}^k U_j,$ then the following mappings are continuous:
\begin{equation}
\label{Equation: Intersection Continuity}
\eS_\Phi^U(X,x_0) \ni S \longmapsto S \cap U_j \in \eS_\Phi(X,x_0), \qquad  j = 0, \dots, k.
\end{equation}
\end{cor}
\begin{proof}
This immediately follows from \cref{Lemma: Intersection Inequality}.
\end{proof}

\begin{lem}
\label{Lemma: Openness of Restricted Sp}
If $U$ is an open neighborhood of $x_0,$ then $\eS_\Phi^U(X,x_0)$ is open in $\eS_\Phi(X,x_0).$
\end{lem}
We are interested in the case $U = U_0 \cup \dots \cup U_k,$ where $(U_0,\dots,U_k)$ forms a positively separated tuple of \textit{open} subsets of $X.$
\begin{proof}
Let $S = \{s_1,s_2,\dots\}^*$ be a multiset in $\eS_\Phi^U(X,x_0),$ and let $\delta := \dist(\supp S, X \setminus U).$ Since the compact set $\supp S$ and the closed set $X \setminus U$ are disjoint, we have $\delta > 0.$ If $T \in \eS_\Phi(X,x_0)$ is a multisubset satisfying $d_\Phi(S,T) < \delta,$ then there exists an enumeration $(t_i)_{i \in\N}$ of $T,$ such that \cref{Equation : Intersection Inequality} holds true. It follows that $t_i \in U$ for all $i \in \N,$ and so $T \in \eS_\Phi^U(X,x_0).$ Thus, $\eS^U_\Phi(X,x_0)$ is an open subset of $\eS_\Phi(X,x_0).$
\end{proof}

\subsection{Continuity of multiset-valued mappings}

The purpose of the current subsection is to establish several results about continuity of multiset-valued mappings. We will
make use of the following terminology;
\begin{defn}
The \textbi{rank} of a $\eS_\Phi(X,x_0)$-valued mapping $S$ on a set $I$ is defined to be the smallest non-negative number $n$ such that $\rank S(t) \leq n$ for all $t \in I.$ The mapping $S$ is called a \textbi{finite-rank mapping}, if it has a finite rank.
\end{defn}

\begin{notation}
Let $I$ be a fixed metric space.
\end{notation}

\subsubsection{Continuity of sum, difference, and intersection}

Let $S,T : I \to \eS_\Phi(X,x_0)$ be two mappings, and let $U$ be a subset of $X.$ We define $S + T : I \to \eS_\Phi(X,x_0)$ and $S - T : I \to \eS_\Phi(X,x_0)$ in case of $T(t) \subseteq S(t)$ for all $t \in I,$ by $S \pm T := S(\cdot) \pm T(\cdot).$ We also define $S \cap U: I \to \eS_\Phi(X,x_0)$ by $S \cap U := S(\cdot) \cap U.$ 

\begin{pro}
\label{Proposition: Continuity of Sum and Difference}
If $S,T : I \to \eS_\Phi(X,x_0)$ are continuous mappings, then we have the following assertions:
\begin{enumerate}[(i)]
\item The mapping $S + T : I \to \eS_\Phi(X,x_0)$ is continuous.
\item If $T(t) \subseteq S(t)$ for all $t \in I$ and if a point $t_0 \in I$ has a neighborhood $I_0$ such that the restriction $T|_{I_0}$ is finite-rank, then $S - T : I \to \eS_\Phi(X,x_0)$ is continuous at $t_0.$
\end{enumerate}
\end{pro}
\begin{proof}
It follows from \cref{Sum Inequality} that
\[
d_\Phi((S+T)(t),(S+T)(t')) \leq d_\Phi(S(t),S(t')) + d_\Phi(T(t),T(t')),  
\qquad t,t' \in I.
\]
The continuity of $S + T$ follows from that of $S,T.$ As for the continuity of the difference $S - T,$ assume that $T(t) \subseteq S(t)$ for all $t \in I,$ and that a point $t_0 \in I$ has a neighborhood $I_0$ such that the restriction $T|_{I_0}$ is finite-rank. That is, for some large enough $n \in \N$ we have $\rank(T(t) + T(t_0)) \leq n$ for each $t \in I_0.$ It follows from \cref{Difference Inequality 2} that
\[
d_\Phi((S-T)(t_0),(S-T)(t)) \leq 2^{n} \left(d_\Phi(S(t_0),S(t)) + d_\Phi(T(t_0),T(t))\right), \qquad t \in I_0.
\]
The continuity of $S - T$ at $t_0$ follows from that of $S,T.$
\end{proof}

\begin{pro}
\label{Proposition: Reducing Open Sets}
Let $S : I \to \eS_\Phi(X,x_0)$ be a continuous mapping, and let $t_0 \in I$ be fixed. Suppose $\supp S(t_0) \subseteq U_0 \cup \dots \cup U_k$ for some positively separated tuple $(U_0, \dots, U_k)$ of open subsets of $X.$ Then there exists a neighborhood $I_0$ of $t_0$ such that the following mappings are all continuous:
\begin{equation}
\label{Equation: Reducing Open Sets and Intersection}
I_0 \ni t \longmapsto S(t) \cap U_i \in \eS_\Phi(X,x_0), \qquad i=0,1, \dots, k.
\end{equation}
Furthermore, the neighborhood $I_0$ has the following properties:
\begin{enumerate}[(i)]
\item We have $S(t) = S(t) \cap U_0 + \dots + S(t) \cap U_k$ for all $t \in I_0.$
\item We have $d_\Phi(S(t) \cap U_i, S(t') \cap U_i) \leq d_\Phi(S(t),S(t'))$ for each $t,t' \in I_0$ and each $i=0,1, \dots, k.$
\item We have $\rank (S(t) \cap U_i) = \rank (S(t') \cap U_i)$ for each $t,t' \in I_0$ and each $i= 1, \dots, k.$
\end{enumerate}
\end{pro}
\begin{proof}
Let $U = \bigcup_{j=0}^k U_j.$ It follows from the continuity of $S$ and \cref{Lemma: Openness of Restricted Sp} that there exists a  neighborhood $I_0$ of $t_0$ such that $S(t) \in \eS_\Phi^U(X,x_0)$ for each $t \in I_0.$ The continuity of \cref{Equation: Reducing Open Sets and Intersection} follows from the fact that it is the composition of $I_0 \ni t \longmapsto S(t) \in \eS_\Phi^U(X,x_0)$ and \cref{Equation: Intersection Continuity}. We may assume without loss of generality that for each $t,t' \in I_0$ the metric $d_\Phi(S(t),S(t_0))$ never exceeds the separation $\delta$ of the tuple $(U_0, \dots, U_k).$ The remaining assertions immediately follow from \cref{Lemma: Intersection Inequality}.
\end{proof}

\subsubsection{Continuity of induced mappings}
Note that finitely many mappings $\lambda_1, \dots, \lambda_n : I \to X$ induce the mapping $I \ni t \longmapsto \{\lambda_1(t),\dots, \lambda_n(t)\}^* \in \eS_\Phi(X,x_0),$ which will be denoted by $\{\lambda_1, \dots, \lambda_n\}^*.$ If the mappings $\lambda_1, \dots, \lambda_n$ are all continuous, then so is the induced mapping $\{\lambda_1, \dots, \lambda_n\}^*$ by \cref{Equation: Finite Sum}. To give an infinite-rank version of this result, we introduce the following terminology;

\begin{defn}
\label{Pointwise Summable}
A sequence $(\lambda_i)_{i \in \N}$ of $X$-valued mappings on the metric space $I$ is said to be \textbi{pointwise $\Phi$-summable}, if for each $t \in I$ we have $\{\lambda_1(t), \lambda_2(t),\dots\}^* \in \eS_\Phi(X,x_0).$
\end{defn}

As with the finite-rank case above, given a pointwise $\Phi$-summable sequence $(\lambda_i)_{i \in \N}$ of continuous mappings on $I,$ we denote by $\{\lambda_1,\lambda_2, \dots\}^*$ the following mapping;
\begin{equation}
\label{Equation: Induced Mapping}
I \ni t \longmapsto \{\lambda_1(t),\lambda_2(t),\dots\}^* \in \eS_\Phi(X,x_0).
\end{equation}
The following example shows that \cref{Equation: Induced Mapping} is \textit{not} continuous in general;

\begin{eg}
\label{Example: Epsilon Separation Does Not Hold for Compact Intervals}
We shall consider the metric space $\eS_1(\R_+,0),$ where the set $\R_+$ of non-negative real numbers is equipped with the standard metric $\rho(x,y) := |x - y|.$ For each $j \in \N,$ let $x_j := 1/2^{j-1},$ and let $I_j := [x_{j+1},x_{j}].$ Suppose that we have a sequence $(\lambda_j)_{j \in \N}$ of $\R_+$ -valued continuous functions, where each $\lambda_j : I_j \to \R_+$ satisfies the following three conditions:
\[
\lambda_j(x_{j+1}) = 0, \qquad \lambda_j \left(\frac{x_{j+1} + x_j}{2}\right) = 1, \qquad \lambda_j(x_j) = x_j.
\]
We can then continuously extend the domain $I_j = [x_{j+1},x_{j}]$ of each $\lambda_j$ to $[0,1]$ as follows;
\begin{equation}
\lambda_j(t) = 
\begin{cases}
x_j, & \mbox{if } x_j \leq t, \\
\lambda_j(t), & \mbox{if } t \in I_j, \\
0, & \mbox{if } t \leq x_{j+1}. 
\end{cases}
\end{equation}
Let $S(t) := \{\lambda_1(t),\lambda_2(t),\lambda_3(t), \dots\}^*$ for each $t \in [0,1].$ The extended sequence $(\lambda_j)_{j \in \N}$ is pointwise summable, since for each $j \in \N$ and each $t \in I_j$ we have
\[ 
S(t) = \{0, \dots, 0, \lambda_{j}(t), x_{j+1}, x_{j+2}, \dots\}^*, \qquad
S(0) = O_0, \qquad S(1) = \{x_1, x_2, x_3, \dots \}^*.
\]
It remains to show that the mapping $[0,1] \ni t \longmapsto S(t) \in \eS_1(\R_+,0)$ fails to be continuous at $t=0.$ Indeed, any open neighborhood of $0$ contains some $I_j \ni (x_j + x_{j+1})/2,$ and so we get
\begin{equation}
\label{Equation: Epsilon Separation Does Not Hold for Compact Intervals}
\rho_1(S(t_j), O_0) = \lambda_{j}(t_j) + \sum_{j=1}^\infty x_{n+1} \geq \lambda_{j}(t_j) = 1, \qquad t_j := \frac{x_{j+1} + x_{j}}{2}.
\end{equation}
On the other hand, the restriction of $S$ to the non-compact interval $(0,1] := \bigcup_{j \in \N} I_j$ is continuous, since $S$ on each $I_j$ is continuous. Indeed, we have $S(t) = \{\lambda_{j}(t)\}^* + \{x_{j+1}, x_{j+2}, \dots\}^*$ for each $t \in I_j.$
\end{eg}

Nevertheless, we have the following criterion;
\begin{pro}
\label{Theorem: Induce Criterion}
Let $\Phi$ be regular in the sense of \cref{Lemma: Characteristion of Regularity}, and let $I$ be compact. Let $(\lambda_i)_{i \in \N}$ be a pointwise $\Phi$-summable sequence of continuous $X$-valued mappings on $I.$ Then the following statements are equivalent:
\begin{enumerate}[(i)]
\item We have $\Phi(d(x_0,\lambda_{i+1}(\cdot)), d(x_0,\lambda_{i+2}(\cdot)), \dots) \to 0$ uniformly as $i \to \infty.$
\item The mapping $\xi_0 := (d(x_0,\lambda_i(\cdot)))_{i \in \N} : I \to \ell_\Phi(\R)$ is continuous.
\item The induced mapping $S := \{\lambda_1(\cdot),\lambda_2(\cdot),\dots\}^*$ is continuous.
\end{enumerate}
\end{pro}
Note that in \cref{Example: Epsilon Separation Does Not Hold for Compact Intervals} the sequence $\xi_0 := (\rho(0, \lambda_{i}(\cdot)))_{i \in \N} = (\lambda_{i}(\cdot))_{i \in \N}$  fails to be continuous at $0$ due to \cref{Equation: Epsilon Separation Does Not Hold for Compact Intervals}. As we shall see shortly, the assumption of $I$ being compact will be only used in the implication $\textnormal{(iii)} \Rightarrow \textnormal{(i)}.$
\begin{proof}
We proceed as $\textnormal{(i)} \Rightarrow \textnormal{(ii)} \Rightarrow \textnormal{(iii)} \Rightarrow \textnormal{(i)}.$  Before taking up the proof, let us first introduce some notation. For each $i \in \N,$ we define
\[
S_i(\cdot) := \{\lambda_1(\cdot), \dots, \lambda_i(\cdot)\}^*, \qquad 
\xi_i(\cdot) := (d(x_0,\lambda_1(\cdot)),\dots,d(x_0,\lambda_i(\cdot)),0,0,\dots).
\]
Note that each $\xi_i : I \to \ell_\Phi(\R)$ is continuous by the reverse triangle inequality with respect to $d.$ Since $\Phi$ is regular, we have
$
\lim_{i \to \infty} \Phi(\xi_0(t) - \xi_i(t)) = 0
$
for each $t \in I.$ To prove the implication $\textnormal{(i)} \Rightarrow \textnormal{(ii)},$ suppose that the following convergence is uniform;
\[
\lim_{i \to \infty} \Phi(d(x_0,\lambda_{i+1}(\cdot)), d(x_0,\lambda_{i+2}(\cdot), \dots)
=
\lim_{i \to \infty} \Phi(\xi_0(\cdot) - \xi_i(\cdot))
= 0.
\]
The continuity of $\xi_0$ follows from that of each $\xi_i$ by the uniform limit theorem. To prove the implication $\textnormal{(ii)} \Rightarrow \textnormal{(iii)},$ we assume that $\xi_0$ is continuous. Observe that for each $i_0 = 0,1,2,\dots,$ the following ``cut-off mapping'' is obviously 1-Lipschitz continuous;
\[
\ell_\Phi(\R) \ni (\xi_1,\xi_2,\dots) \longmapsto (\xi_{i_0 +1},\xi_{i_0 + 2},\dots) \in \ell_\Phi(\R).
\]
It follows that for each $i_0$ the sequence $(d(x_0,\lambda_{i_0+i}(\cdot)))_{i \in \N}$ is continuous, from which the continuity of $d_\Phi(O_{x_0},(S - S_{i_0})(\cdot)) = \Phi[(d(x_0,\lambda_{i_0+i}(\cdot)))_{i \in \N}]$ follows. To prove the continuity of $S,$ we let $\epsilon > 0$ and $t_0 \in I$ be arbitrary. Since $\Phi$ is regular and $(d(x_0,\lambda_i(t_0)))_{i \in \N} \in \ell_\Phi(\R),$ there exists a large enough integer $i_0$ (depending on both $\epsilon$ and $t_0$) such that
\begin{equation}
\label{Equation0: Induce Criterion}
d_\Phi(O_{x_0}, (S - S_{i_0})(t_0))
    = \Phi[(d(x_0,\lambda_{i_0+i}(t_0)))_{i \in \N}] < \frac{\epsilon}{4}.
\end{equation}
Since $d_\Phi(O_{x_0}, (S -S_{i_0})(\cdot))$ is continuous at $t_0,$ there exists a neighborhood $I_0$ of $t_0$ with
\begin{equation}
\label{Equation1: Induce Criterion}
d_\Phi(O_{x_0}, (S - S_{i_0})(t)) < \frac{\epsilon}{2}, 
\qquad t \in I_0.
\end{equation}
Since $S_{i_0}$ is continuous at $t_0,$ we may shrink $I_0$ if necessary, to
ensure that
\begin{equation}
\label{Equation 2: Induce Criterion}
d_\Phi(S_{i_0}(t_0), S_{i_0}(t)) < \frac{\epsilon}{4}, 
\qquad  t \in I_0.
\end{equation}
It follows from \crefrange{Equation0: Induce Criterion}{Equation 2: Induce Criterion} that for all $t \in I_0$ we have
\begin{align*}
d_\Phi(S(t_0),S(t))
    &= d_\Phi((S - S_{i_0})(t_0) + S_{i_0}(t_0), (S - S_{i_0})(t)+ S_{i_0}(t)) \\
    &\leq d_\Phi((S - S_{i_0})(t_0),(S - S_{i_0})(t)) + d_\Phi(S_{i_0}(t_0),S_{i_0}(t)) \\
    &\leq d_\Phi((S - S_{i_0})(t_0), O_{x_0}) + d_\Phi(O_{x_0},(S - S_{i_0})(t)) + d_\Phi(S_{i_0}(t_0),S_{i_0}(t)) \\
    &< \frac{\epsilon}{4} + \frac{\epsilon}{2} + \frac{\epsilon}{4} = \epsilon,
\end{align*}
thereby establishing the continuity of $S$ at $t_0.$ To prove the last implication $\textnormal{(iii)} \Rightarrow \textnormal{(i)},$  we assume that $S$ is continuous. Then $S - S_i$ is continuous for each $i \in \N.$ It follows from the continuity of $d_\Phi$ that each $f_i := d_\Phi(O_{x_0},(S-S_i)(\cdot)) : I \to \R$ is continuous. By construction, $(f_i)_{i \in \N}$ is a pointwise non-increasing sequence of continuous functions, which converges to $0$ pointwise. Since $I$ is compact, it follows from Dini's theorem (see \cite[Theorem 7.13]{Book:Rudin76} for details) that $f_i \to 0$ uniformly.
\end{proof}

We have the following corollary of \cref{Theorem: Induce Criterion};

\begin{cor}
\label{Corollary: Subsequence Corollary}
Let $\Phi$ be regular, and let $I$ be compact. Let $(\lambda_i)_{i \in \N}$ be a pointwise $\Phi$-summable sequence of continuous $X$-valued mappings on $I,$ such that $S := \{\lambda_1,\lambda_2,\dots\}^*$ is continuous. Then the following assertions hold true:
\begin{enumerate}[(i)]
\item The sequence $(\lambda_i(\cdot))_{i \in \N}$ converges uniformly to $x_0.$
\item If $(\lambda'_i(\cdot))_{i \in \N}$ is a subsequence of $(\lambda_i(\cdot))_{i \in \N},$ then the induced mapping $S' := \{\lambda'_1,\lambda'_2,\dots\}^*$ is continuous.
\end{enumerate}
\end{cor}
\begin{proof}
Since $S= \{\lambda_1,\lambda_2,\dots\}^*$ is continuous, it follows from \cref{Theorem: Induce Criterion} that the following convergence is uniform;
\begin{equation}
\label{Equation1: Uniform Convergence Property}
\Phi(d(x_0,\lambda_{i+1}(\cdot)), d(x_0,\lambda_{i+2}(\cdot)), \dots) \to 0.
\end{equation}
For each fixed $\epsilon > 0,$ there exists a large enough integer $i_0 \in \N,$ such that for all $t \in I$ and all $i > i_0$ we have
\begin{equation}
\epsilon
>\Phi(d(x_0, \lambda_{i+1}(t)), d(x_0, \lambda_{i+2}(t), \dots)
\geq d(x_0, \lambda_{i+1}(t)).
\end{equation}
The first assertion follows. On the other hand, any subsequence $(\lambda'_i(\cdot))_{i \in \N}$ of $(\lambda_i(\cdot))_{i \in \N}$ also has the uniform convergence property \cref{Equation1: Uniform Convergence Property}. The second assertion follows from \cref{Theorem: Induce Criterion}. 
\end{proof}

We conclude the current section with the following remark;
\begin{rem}
\label{Remark: Induce Remark}
Let $\Phi$ be regular, and let $I$ be compact. Given a continuous $X$-valued mapping $\lambda$ defined on $I,$ we let
$
R(\lambda) := \sup_{t \in I} d(x_0,\lambda(t)).
$
With this notation in mind, a sequence $(\lambda_i(\cdot))_{i \in \N}$ of continuous $X$-valued mappings on $I$ converges uniformly $x_0$ if and only if $R(\lambda_i) \to 0$ as $i \to \infty.$ That is, given a continuous mapping $S : I \to \eS_\Phi(X,x_0)$ admitting a representation $S = \{\lambda_1, \lambda_2, \dots\}^*,$ where each $\lambda_j : I \to X$ is continuous, it follows from \cref{Corollary: Subsequence Corollary} (i) that no matter how small $\epsilon > 0$ we may be given, all but finitely many mappings in $\lambda_1, \lambda_2,\dots$ have their images entirely included in the open $\epsilon$-neighborhood of $x_0.$
\end{rem}

\section{Continuous Enumeration of Multiset-valued Mappings}
\label{Section: Continuous Enumeration of Multiset-valued Mappings}


\begin{notation}
Let $\Phi$ be a (not necessarily regular) symmetric norm, and let $(X,x_0)$ be a based metric space endowed with a metric $d.$ The function $\tan^{-1}$ gives an isometric isomorphism from the metric space $[-\infty,+\infty]$ onto $[-\pi/2, \pi/2],$ which allows us to identify any subinterval of $[-\infty,+\infty]$ with an interval of finite length.
\end{notation}

Recall that \cref{Theorem: Induce Criterion} gives a criterion under which a given sequence of continuous $X$-valued mappings $\lambda_1, \lambda_2,\dots$ implies the continuity of the associated $\eS_\Phi(X,x_0)$-valued mapping $\{\lambda_1, \lambda_2,\dots\}^*.$ A different kind of continuity result is the following;

\begin{thm}[existence of continuous enumeration]
\label{Theorem: Continuous Enumeration}
Let $I$ be any subinterval of $[-\infty,\infty],$ and let $S$ be a continuous $\eS_\Phi(X,x_0)$-valued mapping on $I.$ Then there exist infinitely many continuous $X$-valued mappings $\lambda_1, \lambda_2,\dots : I \to X,$ such that $S(t) = \{\lambda_1(t),\lambda_2(t),\dots\}^*$ for all $t \in [0,1].$
\end{thm}

Any sequence $(\lambda_i)_{i \in \N}$ of continuous $X$-valued mappings satisfying the conclusion of \cref{Theorem: Continuous Enumeration} will be called a \textbi{continuous enumeration} of $S.$ The purpose of the current section is to prove \cref{Theorem: Continuous Enumeration}.

\subsection{A strategy of the proof}

In this section we shall prove the following two technical results:

\begin{thm}[finite-rank continuous enumeration]
\label{Theorem: Finite-rank Continuous Enumeration}
Let $I$ be any subinterval of $[-\infty,\infty],$ and let $S$ be a continuous $\eS_\Phi(X,x_0)$-valued mapping on $I.$ If $S$ has a finite rank $n,$ then there exist $n$ continuous mappings $\lambda_1, \dots, \lambda_n : I \to X,$ such that $S = \{\lambda_1, \dots, \lambda_n\}^*.$
\end{thm}

\begin{pro}[$\epsilon$-separation]
\label{Theorem: Epsilon Separation}
Let $I$ be any compact subinterval of $[-\infty,\infty],$ and let $S$ be a continuous $\eS_\Phi(X,x_0)$-valued mapping on $I.$ Then for each $\epsilon>0,$ there exists a finite-rank continuous mapping $S_\epsilon : I \to \eS_\Phi(X,x_0),$ such that for all $t \in I$ we have $S_\epsilon(t) \subseteq S(t)$ and $S(t) -S_\epsilon(t) \subseteq B_{\epsilon}(x_0).$ Here, $B_{\epsilon}(x_0)$ denotes the open $\epsilon$-neighborhood around $x_0.$
\end{pro}

\cref{Theorem: Finite-rank Continuous Enumeration} is a multiset analogue of \cref{Theorem: Kato's Selection Theorem}, Kato's finite-dimensional continuous enumeration, while \cref{Theorem: Epsilon Separation} is inspired by \cref{Remark: Induce Remark}.  Note that \cref{Theorem: Epsilon Separation} does not hold true in general for non-compact intervals $I$ (see \cref{Example: Epsilon Separation Does Not Hold for Compact Intervals}).

\begin{proof}[Proof of \cref{Theorem: Continuous Enumeration} via \cref{Theorem: Finite-rank Continuous Enumeration} and \cref{Theorem: Epsilon Separation}]
(A) Let us assume first that $I$ is compact. We set $\epsilon_n := 1 / n$ for each $n \in \N$ and proceed inductively. It follows from \cref{Theorem: Epsilon Separation} that there exists a finite-rank continuous mapping $S_{\epsilon_1} : I \to \eS_\Phi(X,x_0),$ such that for all $t \in I$ we have $S_{\epsilon_1}(t) \subseteq S(t)$ and $S(t) -S_{\epsilon_1}(t) \subseteq B_{\epsilon_1}(x_0).$ We can then apply the same proposition to $S - S_{\epsilon_1}$ with $\epsilon = \epsilon_2,$ where the continuity of this mapping is guaranteed by \cref{Proposition: Continuity of Sum and Difference} (ii). In this way we obtain another finite-rank continuous mapping $S_{\epsilon_2} : I \to \eS_\Phi(X,x_0)$ satisfying the desired property. Proceeding this way, we can form a sequence of continuous mappings $S_{\epsilon_1},S_{\epsilon_2},\dots : I \to \eS_\Phi(X,x_0),$ where each $S_{\epsilon_i}$ admits a representation $S_{\epsilon_i} = \{\lambda^i_1,\dots,\lambda^i_{n_i}\}^*$ according to \cref{Theorem: Finite-rank Continuous Enumeration}. By construction, $S = \{\lambda^1_1,\dots,\lambda^1_{n_1},\lambda^2_1,\dots,\lambda^2_{n_2},\dots\}^*,$ and so the claim follows.

(B) Let us drop the assumption of $I$ being compact. Suppose that $S$ is a continuous $\eS_\Phi(X,x_0)$-valued mapping on a non-compact interval $I.$ That is, $I$ is either half-open or open. We can then construct an increasing sequence $(I_i)_{i \in \N}$ of compact intervals with $I = \bigcup_i I_i,$ where the restriction $S|_{I_i}$ admits a continuous enumeration $(\lambda^i_j)_{j \in \N}$ by Part (A). We may assume without loss of generality that for each $i \in \N$ we have $\lambda^{i}_j(t) = \lambda^{i+1}_j(t)$ for each $t \in I_i$ and each $j$ by using the obvious inductive argument. Let
\[
\lambda_j(t) = \lim_{i \to \infty} \lambda^i_j(t), \qquad t \in I.
\]
By construction, $(\lambda_j)_{j \in \N}$ is a continuous enumeration of $S.$ The proof is complete.
\end{proof}

The rest of the current section is devoted to proving \cref{Theorem: Finite-rank Continuous Enumeration} and \cref{Theorem: Epsilon Separation}.

\subsection{Existence of finite-rank continuous enumeration (Proof of \texorpdfstring{\cref{Theorem: Finite-rank Continuous Enumeration}}{})}

We shall first prove \cref{Theorem: Finite-rank Continuous Enumeration} by mimicking Kato's original proof for \cref{Theorem: Kato's Selection Theorem} in the language of multisets. Given a continuous $\eS_\Phi(X,x_0)$-valued mapping $S,$ there is no natural way to enumerate $S$ in general. However, there are some obvious cases;

\begin{lem}
\label{Theorem: Simple Continuous Enumeration}
Let $I$ be a metric space, and let $S : I \to X$ be a continuous mapping of finite rank $n.$ Suppose that for each $t \in I$ the multiset $S(t)$ admits a representation $S(t) = \{\lambda(t), \dots, \lambda(t)\}^*$ for some $\lambda(t) \in X.$ In this case, $\lambda(t)$ depends continuously on $t \in I.$ That is, $S$ admits a continuous enumeration.
\end{lem}
\begin{proof}
This follows from Inequality \cref{Equation: Phi-estimate}. 
\end{proof}

\begin{proof}[Proof of \cref{Theorem: Finite-rank Continuous Enumeration}]
We may assume without loss of generality that the domain $I$ has a finite length. For brevity, let us call the finite sequence $(\lambda_1,\dots,\lambda_n)$ satisfying the conclusion of \cref{Theorem: Finite-rank Continuous Enumeration} a \textbi{finite-rank continuous enumeration} of $S.$

(A) We prove one preliminary result beforehand. Let $I_1,I_2$ be two subintervals of $I,$ such that $I_1 \cup I_2$ is again a subinterval of $I.$ It is easy to see that if the restrictions $S|_{I_1},S|_{I_2}$ have finite-rank continuous enumerations $(\lambda^{1}_i)_{i=1}^n,(\lambda^{2}_i)_{i=1}^n$ respectively, then $S$ has a continuous enumeration on $I_1 \cup I_2.$ This is because the intersection $I_1 \cap I_2$ contains at least one point, say $t_0,$ and the two sequences $(\lambda^{1}_i(t_0))_{i=1}^n,(\lambda^{2}_i(t_0))_{i=1}^n$ are identical up to a permutation. It follows from this result that if $J$ is a subinterval of $I$ such that each point of $J$ has a neighborhood on which $S$ has a finite-rank continuous enumeration, then $S$ has a finite-rank continuous enumeration on the whole interval $J.$ Indeed, such $J$ has the property that the mapping $S$ admits a finite-rank continuous enumeration on any compact subinterval of $J,$ and so as in Part (B) of the proof of \cref{Theorem: Continuous Enumeration}, we can express $J$ as the union of increasing compact intervals.

(B) We proceed by induction on $n \geq 1.$ The base step $n = 1$ is done in \cref{Theorem: Simple Continuous Enumeration}. Suppose that the claim is proved for $n$ replaced by a smaller number and for any interval $I.$ Let $\Gamma$ be the set of all $t \in I$ such that $S(t)$ admits a representation $S(t) =
\{\lambda(t),\dots,\lambda(t)\}^*,$ where the point $\lambda(t) \in X$ is repeated $n$ times. It follows from \cref{Theorem: Simple Continuous Enumeration} again that $\lambda(t)$ depends continuously on $t \in \Gamma.$ Since $\Gamma$ is a closed subset of $I$ by \cref{Proposition: Reducing Open Sets}, the open set $I \setminus \Gamma$ can be written as countable union of pairwise disjoint open subintervals $I_1,I_2,\dots$ of $I.$ Given any such interval $I_j$ and any point $t_j \in I_j,$  since $\supp S(t_j)$
can be written as a union of two non-empty finite subsets of $X,$ it follows from the induction hypothesis and \cref{Proposition: Reducing Open Sets} that $t_j$ has a neighborhood on which a finite-rank continuous enumeration exists. It follows from (A) that a finite-rank continuous enumeration $(\lambda^{j}_1,\dots,\lambda^{j}_n)$ exists on each $I_j.$ We define  $\lambda_1,\dots,\lambda_n : I \to X$ by
\begin{equation}
\label{Definition of Lambda i}
\lambda_i(t):=
    \begin{cases}
        \lambda(t), & \mbox{if $t \in \Gamma,$}\\
        \lambda_i^{j}(t), & \mbox{if $t \in I_j,$ $j=1,2,\dots.$}
    \end{cases}
\end{equation}
We have $S = \{\lambda_1, \dots, \lambda_n\}^*$ by construction. It remains to prove the continuity of each $\lambda_i$ at $t_0 \in \Gamma.$ For such $t_0,$ it follows from \cref{Equation: Phi-estimate} that
\[
\sup_{1 \leq i \leq n} d(\lambda_i(t_0),\lambda_i(t)) = d(\lambda(t_0), \lambda_i(t)) \leq  2 \, d_\Phi(S(t_0), S(t)).
\]
Therefore, the continuity of $\lambda_i$ at $t_0 \in \Gamma$ follows from that of $S.$ The claim follows.
\end{proof}

\subsection{Proposition of \texorpdfstring{$\epsilon$}{epsilon}-separation (Proof of \texorpdfstring{\cref{Theorem: Epsilon Separation}}{})}

Let $S : I \to \eS_\Phi(X,x_0)$ be a continuous mapping, and let $\epsilon>0$ be fixed. A pair $(J,S_\epsilon)$ of a subinterval $J$ of $I$ and a finite-rank continuous mapping $S_\epsilon : J \to \eS_\Phi(X,x_0)$ is said to have \textbi{$\epsilon$-separation}, if for all $t \in J$ we have $S_\epsilon(t) \subseteq S(t)$ and $S(t) - S_\epsilon(t) \subseteq B_\epsilon(x_0).$ For brevity we also say that the interval $J$ has $\epsilon$-separation in this case. Let us first prove the following local version of \cref{Theorem: Epsilon Separation};

\begin{lem}
\label{Lemma: Local LFS}
Let $I$ be any subinterval of $[-\infty,\infty].$ If $S : I \to \eS_\Phi(X,x_0)$ is continuous and if $\epsilon>0$ is fixed, then any point $t_0 \in I$ has a neighborhood with $\epsilon$-separation.
\end{lem}
\begin{proof}
Given any $t_0 \in I$ there exists $\epsilon_0 \in (0,\epsilon),$ such that $S(t_0) \cap B_{\epsilon_0}(x_0) = S(t_0) \cap B_{\epsilon}(x_0)$ and the distance between $x_0$ and any point in $S(t_0)$ is never $\epsilon_0.$ If we set $U_0 := B_{\epsilon_0}(x_0),$ then there exists an open set $U_1$ in $X,$ such that $\dist(U_0,U_1) > 0$ and $(S(t_0) \setminus U_0) \subseteq U_1.$ Since $S(t_0) \subseteq U_0 \cup U_1,$ where $(U_0,U_1)$ is a positively separated pair of open sets, it follows that $t_0$ has a neighborhood $I_0$ with the properties (i),(ii),(iii) in \cref{Proposition: Reducing Open Sets}. Then $I_0$ has $\epsilon$-separation with respect to the restriction of $S \cap U_1$ to $I_0.$
\end{proof}

This allows us to prove the following selection lemma by using Zorn's lemma;
\begin{lem}
\label{Lemma: Extension Lemma}
Let $I = [a,b]$ be a compact interval in $[-\infty,\infty],$ and let $S$ be a continuous $\eS_\Phi(X,x_0)$-valued mapping on $I.$ If $\lambda_a \in S(a)$ (resp. $\lambda_b \in S(b)$), then there exists a continuous mapping $\lambda : I \to X$ with the property that $\lambda(t) \in S(t)$ for all $t \in I$ and $\lambda(a) = \lambda_{a}$ (resp. $\lambda(b) = \lambda_{b}$).
\end{lem}
\begin{proof}
We shall only consider the case $\lambda_a \in S(a),$ since the other result follows by symmetry. Let $\cC$ be the set of all those pairs of the form $(J, \lambda),$ where $J$ is a subinterval of $I$ containing $a$ and $\lambda$ is a continuous $X$-valued mapping on $J,$ such that $\lambda(t) \in S(t)$ for all $t \in J$ and $\lambda(a) = \lambda_{a}.$ We define the partial order $\preceq$ on $\cC$ as follows;
\begin{equation}
\label{Equation: Definition of Partial Order}
(J,\lambda) \preceq (J', \lambda') \mbox{ if and only if } J \subseteq J' \mbox{ and } \lambda' \mbox{ restricted to $J$ is $\lambda.$}
\end{equation}
By Zorn's lemma, the set $\cC$ contains a maximal element $(J,\lambda),$ where $J$ is either of the following forms: $J = [a,t_0]$ or $J = [a,t_0).$ The first case $J = [a, t_0]$ is impossible unless $t_0 = b,$ since \cref{Lemma: Local LFS} and \cref{Theorem: Finite-rank Continuous Enumeration} would allow us to continuously extend the domain of $\lambda.$ 

It remains to show that the second case $J = [a,t_0)$ is contradictory by proving that $\lambda$ can be continuously prolonged to $[a, t_0].$ Since $J$ is maximal, we have that $\lambda(t)$ does not tend to $x_0$ as $t \to t_0.$ Then there exists a neighborhood $U_0$ of $x_0,$ such that any neighborhood of $t_0$ in $[a,t_0]$ contains some point $t'_0$ with $\lambda(t'_0) \notin U_0.$ Since $\eS^{U_0}_\Phi(X,x_0)$ is an open subset of $\eS_\Phi(X,x_0)$ by \cref{Lemma: Openness of Restricted Sp}, it follows that $S(t_0) \setminus U_0$ must contain at least one point other than the basepoint $x_0.$ This allows us to write
\[
\supp (S(t_0) \setminus U_0) = \{x_0, x_1, \dots, x_k\}, \qquad x_i \neq x_j.
\]
We can then choose neighborhoods $U_1, \dots, U_k$ of $x_1, \dots, x_k$ respectively, in such a way that $(U_0, \dots, U_k)$ forms a positively separated tuple of open sets. It follows that there exists an open subinterval $I_0$ of $I,$ such that $t_0 \in I_0$ and the properties (i),(ii),(iii) in \cref{Proposition: Reducing Open Sets} hold true. Let us consider the following disjoint sets:
\[
\Delta_i := \{t \in I_0 \cap J \mid \lambda(t) \in U_i\}, \qquad i = 0, \dots, k.
\]
Note that each $\Delta_i$ is an open subset of $I_0 \cap J.$ Since $I_0 \cap J = \bigcup_{i=0}^k \Delta_i,$ each $\Delta_i$ is also a closed subset of the connected space $I_0 \cap J.$ It follows that there exists $i_0 = 1, \dots, k,$ such that $I_0 \cap J = \Delta_{i_0}$ and $\Delta_{i} = \emptyset$ for each $i \neq i_0.$ The condition (ii) in \cref{Proposition: Reducing Open Sets} gives for each $t \in I_0 \cap J$ we have
\[
d_\Phi(S(t),S(t_0)) \geq d_\Phi(S(t) \cap U_{i_0}, S(t_0) \cap U_{i_0}) = d_\Phi(S(t) \cap U_{i_0}, \{x_{i_0}, \dots, x_{i_0}\}^*) \geq \frac{1}{2} d(\lambda(t), x_{i_0}),
\]
where the last inequality follows from \cref{Equation: Phi-estimate}. Since $t_0$ is an accumulation point of the interval $I_0 \cap J$ by construction, setting $\lambda(t_0) := x_{i_0}$ continuously prolongs the domain of $\lambda$ to $[a_0, t_0].$ This contradicts the maximality of $\lambda,$ and so the second case $J = [a,t_0)$ is impossible. It follows that $J = I.$ 
\end{proof}

We are now in a position to prove \cref{Theorem: Epsilon Separation};

\begin{proof}[Proof of \cref{Theorem: Epsilon Separation}]
(A) We may assume without loss of generality that $I = [0,1].$ It follows from \cref{Lemma: Local LFS} that each $t \in [0,1]$ has a neighborhood $I_{t}$ with $\epsilon$-separation. Suppose that the open cover $\{I_t\}_{t \in [0,1]}$ has Lebesgue number $L > 0.$ We can then choose large enough $N \in \N,$ so that $1/N < L.$ It follows that each of the following intervals is contained in one of the members of the cover $\{I_t\}_{t \in [0,1]}:$ 
\[
\left[0,\frac{1}{N}\right], \qquad 
\left[\frac{1}{N},\frac{2}{N}\right], \qquad \dots, \qquad  \left[\frac{N - 1}{N},1\right].
\]
That is, each of the above intervals has $\epsilon$-separation. It remains to prove the following assertion.

(B) We show that if two subintervals $I_1,I_2$ of $[0,1]$ with $I_1 \cap I_2 = \{t_0\}$ have $\epsilon$-separation, then so does their union $I_1 \cup I_2.$ Let us assume that $I_1$ is located to the left of $I_2,$ and that $(I_1,S_1),(I_2,S_2)$ have $\epsilon$-separation. Note that $S_1,S_2$ admit finite-rank continuous enumerations 
$
S_1 = \{\lambda_1^{1},\dots,\lambda^{1}_{n_1}\}^*$ and
$S_2 = \{\lambda_1^{2},\dots,\lambda^{2}_{n_2}\}^*
$
according to \cref{Theorem: Finite-rank Continuous Enumeration}. We will construct a new finite-rank continuous mapping $S_\epsilon$ out of $S_1, S_2,$ so that $(I_1 \cup I_2, S_\epsilon)$ has $\epsilon$-separation. This process consists of the following two major steps. Firstly, we may assume that after a suitable rearrangement of the second sequence $(\lambda_i^{2}(t_0))_{i=1}^{n_2},$ we get
\begin{equation}
\label{Equation: LFS First Equation}
(\lambda_1^{1}(t_0),\dots,\lambda_n^{1}(t_0)) = (\lambda_1^{2}(t_0),\dots,\lambda_n^{2}(t_0)),
\end{equation}
where $n$ is the largest natural number such that \cref{Equation: LFS First Equation} holds true. We define the continuous mappings
$\lambda_1,\dots,\lambda_n:I_1 \cup I_2 \to X$ by
\[
\lambda_i(t) =
    \begin{cases}
        \lambda^{1}_i(t), & \mbox{ if $t \leq t_0,$}\\
        \lambda^{2}_i(t), & \mbox{ if $t > t_0.$}
    \end{cases}
\]
Secondly, it follows from \cref{Equation: LFS First Equation} that
$
\{\lambda^{1}_{n+1}(t_{0}),\dots,\lambda^{1}_{n_1}(t_{0})\}^* \subseteq S(t_0) - S_2(t_0).
$
Note that \cref{Lemma: Extension Lemma} allows us to continuously extend the domain of each $\lambda^{1}_{n+1}, \dots, \lambda^{1}_{n_1}$ from $I_1$ to $I_1 \cup I_2,$ in such a way that $\{\lambda^{1}_{n+1}(t), \dots, \lambda^{1}_{n_1}(t)\}^* \subseteq S(t) - S_2(t)$ for each $t \in I_2.$ Similarly, we can continuously extend the domain of each $\lambda^{2}_{n+1}, \dots, \lambda^{2}_{n_2}$ from $I_2$ to $I_1 \cup I_2,$ in such a way that $\{\lambda^{2}_{n+1}(t), \dots, \lambda^{2}_{n_2}(t)\}^* \subseteq S(t) - S_1(t)$ for each $t \in I_1.$ It is now easy to see that $I_1 \cup I_2$ has $\epsilon$-separation with the following finite-rank continuous mapping; 
\[
S_\epsilon := \{\lambda_1,\dots,\lambda_n\}^* + \{\lambda^{1}_{n+1}, \dots, \lambda^{1}_{n_1}\}^* + \{\lambda^{2}_{n+1}, \dots, \lambda^{2}_{n_2}\}^*.
\]
\end{proof}

\section{Spectral Flow for Invertible Operators}
\label{Section: Flow of Discrete Spectrum}

We give an intuitive exposition of spectral flow for invertible operators as an application of \cref{Theorem: Continuous Enumeration} in this section. This will be done in the following major steps. For simplicity, we restrict our attention to a certain class of invertible operators which are compact perturbations of the identity operator $1.$ Given a continuous one-parameter family $\{T(t)\}_{t \in [0,1]}$ in such a fixed class, we use known results on the continuity of spectra to conclude that the associated family $\{\sigma(T(t))\}_{t \in [0,1]}$ is continuous with respect to $\eS_\infty(\C^*, 1),$ where $\C^* = \C\setminus\{0\}.$  Next, we introduce a homotopy invariant in $\eS_\infty(\C^*, 1)$ as follows. For a continuous family $S = \{S(t)\}_{t\in[0,1]}$ in $\eS_\infty(\C^*, 1)$ admitting a continuous enumeration $S = \{\lambda_1, \lambda_2, \dots\}^*$ and for each $\theta \in (0,2\pi),$ we define $\mu\left(\theta; \{S(t)\}_{t\in[0,1]} \right) \in \Z$ as the net number of times that the paths $\lambda_j$ cross the given ray $\R_+e^{i\theta}$ in the anticlockwise direction. We can then define the spectral flow of $\{T(t)\}_{t \in [0,1]}$ by
\begin{equation}
\label{Equation: Spectral Flow of T}
\flow(\theta; \{T(t)\}_{t \in [0,1]}) := \mu(\theta; \{\sigma(T(t))\}_{t \in [0,1]}), \qquad \theta \in (0,2\pi),
\end{equation}
This will allow us to recover the previously mentioned formula \cref{Equation: Naive Definition of Unitary Spectral Flow} as a special case where each $T(t)$ is unitary. In particular, if $\{T(t)\}_{t\in[0,1]}$ is a loop of the form $T(0) = T(1) = 1,$ then its spectral flow reduces to the net winding number of its eigenvalues as in \cref{Equation: Naive Definition of Spectral Flow for Loops}.

\subsection{Preliminaries} 
\label{Section: Preliminaries of Flow of Discrete Spectrum}

\begin{thm}[{\cite[Theorem VI.4.1]{Book:Bhatia97}}]
\label{Theorem: Hoffman-Wielandt Inequality}
If $N, N'$ are two $n \times n$ normal matrices, then their eigenvalues can be enumerated as $(\lambda_1,\dots,\lambda_n),\ (\lambda'_1,\dots,\lambda'_n)$ respectively, so that
\begin{equation}
\label{Equation: Hoffman-Wielandt Inequality}
\left(\sum^n_{i=1} |\lambda_i - \lambda'_i|^2\right)^{1/2}
  \leq \|N - N'\|_{\Phi_2},
\end{equation}
where $\|A\|_{\Phi_2} := \sqrt{\tr(A^*A)}$ is the Hilbert-Schmidt norm.
\end{thm}

Inequality \cref{Equation: Hoffman-Wielandt Inequality} is known as the \textbi{Hoffman-Wielandt inequality} for normal matrices, and we shall consider a certain infinite-dimensional analogue of this result (\cref{Theorem: Bhatia-Sinha}). In order to state this theorem we recall some preliminary results first.

Let $\cH$ be a fixed separable Hilbert space. Following \cite[\textsection XIII.4]{Book:ReedSimon4}, we define the \textbi{essential spectrum} of a bounded operator $T$ on $\cH,$ denoted by $\ess(T),$ as the complement of the \textbi{discrete spectrum} $\dis(T).$ That is, $\ess(T)= \sigma(T) \setminus \dis(T),$ where $\dis(T)$ is the set of all those complex numbers $z \in \C,$ such that $z$ is a discrete point of the spectrum $\sigma(T)$ and $z$ is an eigenvalue of $T$ with finite algebraic multiplicity. By Weyl's theorem (see, for example, \cite[Theorem XIII.14]{Book:ReedSimon4}) the essential spectrum of a self-adjoint operator is invariant under compact perturbations. In particular, for each compact operator $A$ on $\cH$ we have 
$
\ess(1 + A) = \ess(1) = \{1\}.
$

Let $\Phi$ be a symmetric norm. The \textbi{singular numbers} of a compact operator $A$ on $\cH,$ denoted by $s_1(A),s_2(A),\dots,$ are the eigenvalues of the positive operator $|A| := \sqrt{A^*A}$ repeated according to their multiplicities and arranged in non-increasing order. 
 The operator $A$ is said to be \textbi{$\Phi$-summable}, if $(s_i(A))_{i \in \N} \in \ell_\Phi(\R).$ That is,
\begin{equation}\label{Phi-Norm}
\|A\|_{\Phi} := \lim_{n \to \infty} \Phi(s_1(A),\dots, s_n(A),0,0,\dots) < \infty.
\end{equation}
The set $\fS_\Phi(\cH)$ of all $\Phi$-summable operators, known
as the \textbi{$\Phi$-Schatten class}, forms a Banach space with the
norm \cref{Phi-Norm} (see, for example, \cite[Theorem III.4.1]{Book:Gohberg-Krein69}). In particular, the \textbi{$p$-Schatten class} is the Banach space $\fS_p(\cH) := \fS_{\Phi_p}(\cH),$ where $\Phi_p$ is given by \cref{Equation: Definition of p-norm}.

The problem of extending the Hoffman-Wielandt inequality \cref{Equation: Hoffman-Wielandt Inequality} to infinite dimensions appears in \cite{Kato87}, where an \textbi{extended enumeration} of the discrete spectrum of a bounded operator $T$ is defined as any infinite sequence $(\lambda_i)_{i \in \N}$ whose terms consist of all members of $\dis(T),$ each of which is repeated according to its algebraic multiplicity, and in addition, the sequence may contain some points of the essential spectrum $\ess(T).$

\begin{thm}[{\cite{Bhatia-Sinha88}}]
\label{Theorem: Bhatia-Sinha}
Let $\cH$ be a separable Hilbert space, and let $\Phi$ be a symmetric norm. For any pair $U,U'$ of unitary operators on $\cH$
with $U - U' \in \fS_\Phi(\cH),$ there exists a pair $(\lambda_i)_{i \in \N},(\lambda'_i)_{i \in \N}$ of extended enumerations of $\dis(U),\dis(U')$ respectively, such that
\[
\Phi(|\lambda_1 - \lambda'_1|,|\lambda_2 - \lambda'_2|,\dots)
\leq \frac{\pi}{2}\|U - U'\|_\Phi.
\]
\end{thm}

Note that there are some other variants of this infinite-dimensional Hoffman-Wielandt inequality (cf. \cite[Theorem II]{Kato87} and \cite[Corollary 2.3]{Bhatia-Davis99}).

\subsection{Continuity of the spectrum}

\begin{notation}
Let $\Phi$ be a regular symmetric norm, and let $\cH$ be a separable infinite-dimensional Hilbert space. Let $\C^* = \C\setminus\{0\},$ and let $\T = \{z \in \C \mid |z| = 1\}.$ 
\end{notation}

Let us consider the group $\eG_\infty(\cH,1)$ of invertible operators which differ from the identity operator $1$ by a compact operator. That is, $T \in \eG_\infty(\cH,1)$ if and only if $0 \notin \sigma(T)$ and $T - 1 \in \fS_\infty(\cH).$ The group $\eG_\infty(\cH,1)$ inherits the uniform norm $\|\cdot\|_\infty$ from $\cB(\cH);$ 
\[
\dist(T,T') := \|T - T'\|_\infty, \qquad T,T' \in \eG_\infty(\cH,1).
\]
Recall that any $T \in \eG_\infty(\cH,1)$ shares the same essential spectrum $\ess(T) = \{1\},$ and so the spectrum of $T$ can be identified with the following multisubset of $(\C^*,1);$
\begin{equation}
\label{Equation: Identify Spectrum with Multiset}
\sigma(T) = \{\lambda_1, \lambda_2, \dots\}^*,
\end{equation}
where $(\lambda_i)_{i \in \N}$ is any extended enumeration of $T.$ Alternatively, we may restrict our attention to the subgroup $\eU_\Phi(\cH,1)$ consisting of all those unitary operators $U$ on $\cH$ with $U - 1 \in \fS_\Phi(\cH).$ In this case, a stronger complete metric is given by the $\Phi$-norm;
\[
\dist_\Phi(U,U') := \|U - U'\|_\Phi \geq \|U -  U'\|_\infty, \qquad U,U' \in \eU_\Phi(\cH,1).
\]

\begin{pro}
\label{Theorem: Continuous Spectra}
With the identification \cref{Equation: Identify Spectrum with Multiset} in mind, the following map is continuous;
\[
\eG_\infty(\cH,1) \ni T \longmapsto \sigma(T) \in \eS_\infty(\C^*,1).
\] 
In addition, the map $\eU_\Phi(\cH,1) \ni U \longmapsto \sigma(U) \in \eS_\Phi(\T,1)$ is $\pi/2$-Lipschitz continuous;
\begin{equation}
\label{Equation: Lipschitz Bhatia-Sinha}
d_\Phi(\sigma(U),\sigma(U')) \leq \frac{\pi}{2} \|U - U'\|_{\Phi},
\qquad U,U' \in \eU_\Phi(\cH,1).
\end{equation}
\end{pro}
\begin{proof}
For the first assertion, recall that the non-zero eigenvalues of compact operators are uniformly continuous in the following sense (see, for example, \cite[Lemma XI.9.5]{Book:Dunford-Schwartz88}). If $A_0,A_1,A_2, \dots$ are compact and $\|A_n - A_0\|_\infty \to 0$ as $n\to\infty,$ then there exist enumerations of their non-zero eigenvalues $(\lambda_j(A_n))_{j \in \N},$ such that 
\[
\sup_{j \in \N} |\lambda_j(A_n) - \lambda_j(A_0)| \to 0.
\]
As for the second assertion, the estimate \cref{Equation: Lipschitz Bhatia-Sinha} immediately follows from \cref{Theorem: Bhatia-Sinha}. 
\end{proof}

\cref{Theorem: Baby Continuous Enumeration} is part of the following corollary;
\begin{cor}
\label{Theorem: Big Continuous Enumeration}
Let $I$ be any subinterval of $[-\infty,\infty],$ and let $\Phi$ be a symmetric norm. Then the following assertions hold true:
\begin{enumerate}
\item If $\{T(t)\}_{t \in I}$ is a continuous one-parameter family  in $\eG_\infty(\cH,1),$ then $\{\sigma(T(t))\}_{t \in I}$ is continuous in $\eS_\infty(\C^*,1).$ Moreover, there exist infinitely many continuous functions $\lambda_1,\lambda_2, \dots : I \to \C^*$ forming a continuous enumeration of $\{\sigma(T(t))\}_{t \in I}.$
\item If $\{U(t)\}_{t \in I}$ is a continuous one-parameter family  in $\eU_\Phi(\cH,1),$ then $\{\sigma(U(t))\}_{t \in I}$ is  $\pi/2$-Lipschitz continuous in $\eS_\Phi(\T,1).$ Moreover, there exist infinitely many continuous functions $\lambda_1,\lambda_2, \dots : I \to \T$ forming a continuous enumeration of $\{\sigma(U(t))\}_{t \in I}.$
\end{enumerate}
\end{cor}
\begin{proof}
This follows from \cref{Theorem: Continuous Spectra} and \cref{Theorem: Continuous Enumeration}.
\end{proof}

\begin{rem}
\label{Remark: Identification}
We can view $\eS_\Phi(\T,1)$ as a subspace of $\eS_\infty(\C^*,1)$ in the following precise sense. The based inclusion map $\iota : (\T,1) \to (\C^*,1)$ induces $\iota_* : \eS_\Phi(\T,1) \to \eS_\Phi(\C^*,1)$ as in \cref{Section: Multiset functors}, where $\eS_\Phi(\C^*,1) \subseteq \eS_\infty(\C^*,1).$ The identification of $\eS_\Phi(\T,1)$ with $\iota_*(\eS_\Phi(\T,1))$ allows us to restrict our attention to $\eS_\infty(\C^*, 1)$ from this point onward.
\end{rem}

\subsection{\texorpdfstring{\(\mu\)}{Mu}-invariant}

\begin{notation}
By a (continuous) \textbi{path} in a topological space $X,$ we mean any continuous $X$-valued mapping on $[0,1].$ The notation $\divideontimes$ denotes concatenation of paths and $\gamma^{-1}$ denotes the reverse of a given path $\gamma.$ As for the basics of Algebraic Topology, such as the definition of fundamental group, the reader is referred to \cite[\textsection 1-2]{Book:Hatcher02}.
\end{notation}

For $\theta\in(0,2\pi)$ and $x,x' \in \R,$ we let
\[
[\theta;x,x'] := \#\{k \in \Z \mid x \leq \theta + 2\pi k < x'\} - \#\{k \in \Z \mid x' \leq \theta + 2\pi k < x\},
\]
where $\#$ denotes cardinality. In words, $[\theta;x,x']$ is the number of points of $\theta+2\pi\Z$ which lie between $x$ and $x',$ taken with a negative sign if $x' < x.$  Given a path $S$ in $\eS_\infty(\C^*,1)$ admitting a continuous enumeration $S = \{r_1e^{i x_1},r_2e^{i x_2},\dots\}^*$ according to \cref{Theorem: Continuous Enumeration}, where each $r_j e^{i x_j}$ is a path of complex numbers in polar form, we introduce 
\begin{equation}
\label{Definition: Mu-invariant}
\mu\left(\theta,\{S(t)\}_{t \in [0,1]} \right) := \sum^\infty_{j=1} [\theta; x_j(0), x_j(1)], \qquad \theta \in (0,2\pi).
\end{equation}
This formula is motivated by the naive definition of unitary spectral flow \cref{Equation: Naive Definition of Unitary Spectral Flow}. Well-definedness of \cref{Definition: Mu-invariant} is part of the following theorem;

\begin{thm}
\label{Theorem: Properties of Mu-invariant}
The formula \cref{Definition: Mu-invariant} is well-defined in the sense that it does not depend on the choice of a continuous enumeration of $S,$ and that $[\theta;x_j(0),x_j(1)] = 0$ for large enough $j.$ Furthermore, if $S,T$ are two paths in $\eS_\infty(\C^*,1),$ then:
\begin{enumerate}[(i)]
  \item If $S,T$ are path-homotopic, then $\mu\left(-,\{S(t)\}_{t \in [0,1]} \right) = \mu\left(-,\{T(t)\}_{t \in [0,1]} \right).$
  \item If $S(1) = T(0),$ then $\mu\left(-,\{(S \divideontimes T) (t)\}_{t \in [0,1]} \right)  = \mu\left(-,\{S(t)\}_{t \in [0,1]} \right) + \mu\left(-,\{T(t)\}_{t \in [0,1]} \right).$
  \item We have $\mu\left(-,\{S^{-1}(t)\}_{t \in [0,1]} \right) = - \mu\left(-,\{S(t)\}_{t \in [0,1]} \right).$
  \item If $S(0) = S(1) = O_1,$ then $\mu\left(-,\{S(t)\}_{t \in [0,1]} \right) = \mathrm{const}.$
\end{enumerate}
\end{thm}

This theorem will allow us to assign to each path $S = \{S(t)\}_{t \in [0,1]}$ in $\eS_\infty(\C^*,1)$ a homotopy invariant $\mu\left(-,\{S(t)\}_{t \in [0,1]} \right) : (0,2\pi) \to \Z$ known as the \textbf{$\mu$-invariant} of the path $S.$

\begin{lem}
\label{Lemma: Finite Sum}
If $S$ is a path in $\eS_\infty(\C^*,1)$ admitting a continuous enumeration $S = \{r_1 e^{i x_1}, r_2 e^{i x_2}, \dots\}^*$ and if $\theta \in (0,2\pi)$ is fixed, then there exists a large enough integer $j_0 \in \N$ such that for each $j \geq j_0$ we have $[\theta; x_j(0),  x_j(1)] = 0.$
\end{lem}
That is, the right hand side of the formal notation \cref{Definition: Mu-invariant} is a finite sum of integers.
\begin{proof}
There exists a small enough $\epsilon > 0,$ such that the open $\epsilon$-neighborhood of $1,$ denoted by $B_\epsilon(1),$ has the property that its closure does not intersect with the rays $\R_+ e^{\pm i \theta}.$ It follows from \cref{Remark: Induce Remark} that there exists  $j_0 \in \N$ such that for each $j \geq j_0$ and each $t \in [0,1]$ we have $r_j(t) e^{i x_j(t)} \in B_\epsilon(1).$ For such $j,$ we have $[\theta; x_j(0), x_j(1)] = 0$ by construction.
\end{proof}

Next, we consider the case where the given path $S = \{r_1 e^{i x_1}, r_2 e^{i x_2}, \dots\}^*$ is a loop based at $O_1 = \{1,1, \dots\}^*.$ That is, $S(0) = S(1) = O_1,$ and so each $r_j e^{i x_j}$ is a loop based at $1.$ In this case, it is not difficult to observe that each $[\theta;x_j(0),x_j(1)]$ represents the \textbi{winding number} of the loop $r_j e^{i x_j}$ about $0.$ That is, $[\theta;x_j(0),x_j(1)] = (x_j(1) - x_j(0))/2\pi$ for each $\theta \in (0,2\pi).$ Note that \cref{Definition: Mu-invariant} has been reduced to the following integer which does not depend on $\theta \in (0,2\pi);$
\begin{equation}
\label{Equation: Definition of mu-invariant for Loops}
\mu\left(\theta,\{S(t)\}_{t \in [0,1]} \right) = \sum^\infty_{j=1} [\theta; x_j(0), x_j(1)] =  \sum^\infty_{j=1} \frac{x_j(1) - x_j(0)}{2\pi} = : \mu\left(\{S(t)\}_{t \in [0,1]} \right).
\end{equation}
We shall make use of the following non-trivial result without proof;

\begin{thm}
\label{Theorem: Flow in Circle Case}
The formula \cref{Equation: Definition of mu-invariant for Loops}, which assigns to each loop $S = \{S(t)\}_{t \in [0,1]}$ in the based topological space $(\eS_\infty(\C^*,1),O_1)$ a unique integer $\mu\left(\{S(t)\}_{t \in [0,1]} \right),$ does not depend on the choice of a continuous enumeration of $S.$ In fact, it induces the following group isomorphism;
\begin{equation}
\label{Equation: Special Case of Dold Thom}
\pi_1(\eS_\infty(\C^*,1),O_1) \ni \left[\{S(t)\}_{t \in [0,1]} \right]_{\pi_1} \longmapsto  \mu\left(\{S(t)\}_{t \in [0,1]} \right) \in \Z,
\end{equation}
where $\pi_1(X,x_0)$ is the fundamental group of a based topological space $(X,x_0)$ and where $\left[\{\gamma(t)\}_{t \in [0,1]} \right]_{\pi_1}$ is the homotopy class represented by a loop $\gamma$ with $\gamma(0) = \gamma(1) = x_0.$
\end{thm}

\begin{rem}
\cref{Theorem: Flow in Circle Case} can be generalised to other multiset spaces $\eS_\Phi(X,x_0).$  Indeed, \cref{Equation: Definition of mu-invariant for Loops} motivates us to introduce an explicit group isomorphism $\pi_1(\eS_\Phi(X,x_0),O_{x_0}) \simeq H_1(X)$ via continuous enumeration, where $H_1(X)$ denotes the first singular homology group of $X.$ This concrete analogue of the \textit{Dold-Thom theorem} (see, for example, \cite[\textsection 4.K]{Book:Hatcher02}), $\pi_1(\SP^\infty(X,x_0)) \simeq H_1(X),$ is the main subject of another paper in preparation (rigorous proof can be found in Y.\,T.'s master's thesis \cite{Tanaka14}). We can then recover \cref{Theorem: Flow in Circle Case} as a special case $\pi_1(\eS_\infty(\C^*,1),O_{1}) \simeq H_1(\C^*) \simeq \Z,$ where the last group isomorphism is given by the winding number for loops in $\C^*.$
\end{rem}

It remains to show that $\mu\left(-,\{S(t)\}_{t \in [0,1]} \right)$ does not depend on the choice of a continuous enumeration of $S = \{S(t)\}_{t \in [0,1]}$ with the aid of \cref{Theorem: Flow in Circle Case}. For $\theta \in (0,2\pi)$ and for $x_0 \in \R,$ we define $\eta_\theta(-;x_0)$ to be the straight path in $\R$ from $x_0$ to the nearest integer multiple of $2 \pi$ which does not cross $\theta + 2\pi\Z.$ More precisely, writing $x_0 = \theta_0 + 2\pi k_0$ with $\theta_0 \in [0,2\pi)$ and $k_0 \in \Z,$ we let for each $t \in [0,1]$
\[
\eta_\theta(t;x_0) := 
\begin{cases}
x_0(1-t) + 2 \pi k_0 t, &\mbox{if } \theta_0 \leq \theta, \\
x_0(1-t) + 2\pi (k_0+1) t, &\mbox{if } \theta_0 > \theta.
\end{cases}
\]
Given $r_0 > 0 $ and $x_0 \in \R,$ we denote by $\gamma_\theta(-;r_0,x_0)$ the following path from $r_0 e^{i x_0}$ to $1;$ 
\[
\gamma_\theta(t;r_0,x_0) := (r_0(1-t) + t) \cdot \exp(i \eta_\theta(t;x_0)), \qquad t \in [0,1], \qquad \theta \in (0,2\pi).
\]

\begin{lem}
\label{Lemma: Mu-invariant for Paths}
If $S_0$ is a fixed multisubset in $\eS_\infty(\C^*,1)$ admitting an enumeration $S_0 = \{s_1, s_2,\dots\}^*,$  then the following path is a well-defined continuous path in $\eS_\infty(\C^*,1)$ from $S_0$ to $O_1$ for each $\theta \in (0,2\pi);$
\begin{equation}
\label{Equation: Path-connected}
\Gamma_{\theta}(S_0) := \left\{\gamma_\theta(-; |s_1|, \arg s_1), \gamma_\theta(-; |s_1|, \arg s_2), \dots \right\}^*.
\end{equation}
Furthermore, if $S$ is a path in $\eS_\infty(\C^*,1)$ and if $\theta \in (0,2\pi),$ then we have
\begin{align}
&T_\theta := \Gamma_{\theta}(S(0))^{-1} \divideontimes S \divideontimes \Gamma_{\theta}(S(1)), \\
\label{Equation: Definition of mu-invariant for Paths}
&\mu \left( \{T_\theta(t)\}_{t \in [0,1]} \right) =  \sum^\infty_{j=1} [\theta;x_j(0),x_j(1)],
\end{align}
where $(r_j e^{i x_j})_{j \in \N}$ is any continuous enumeration of the given path $S.$ 
\end{lem}
The path \cref{Equation: Path-connected} can be used to show that $\eS_\infty(\C^*,1)$ is path-connected. 
\begin{proof}
Let us first show that \cref{Equation: Path-connected} is a well-defined path. Let $S_0 = \{s_1, s_2, \dots\}^*$ be a fixed multiset in $\eS_\infty(\C^*,1),$ and let $B_\epsilon(1)$ be the open $\epsilon$-neighborhood of $1$ as in the proof of \cref{Lemma: Finite Sum}. Since $s_j \to 1$ as $j \to \infty,$ there exists $j_0 \in \N$ such that for each $j \geq j_0$ we have $s_j \in B_\epsilon(1).$   It is then geometrically obvious that 
$
|\gamma_\theta(t;|s_j|, \arg s_j) - 1| \leq 
|s_j - 1|
$
for each $t \in [0,1]$ and each $j \geq j_0.$ It follows that the multiset $\{\gamma_\theta(t; |s_1|, \arg s_1), \gamma_\theta(t;|s_2|, \arg s_2),\dots\}^*$ is $\Phi$-summable for each $t \in [0,1].$ Moreover, 
\[
\sup_{t \in [0,1]} \Phi\left(\left(\gamma_\theta(t;|s_{j_0+j}|, \arg s_{j_0+j}) - 1\right)_{j \in \N}\right) 
\leq \Phi\left(\left(s_{j_0 + j} - 1\right)_{j \in \N}\right) \to 0, \qquad j_0 \to \infty.
\]
This ensures the continuity of $\Gamma_\theta(S_0)$ by \cref{Theorem: Induce Criterion}. Let $S$ be a path in $\eS_\infty(\C^*,1),$ and let $(r_j  e^{i x_j })_{j \in \N}$ be any continuous enumeration of the given path $S.$ Observe that the loop $T_\theta = \Gamma_\theta(S(0))^{-1} \divideontimes S \divideontimes \Gamma_\theta(S(1))$ is continuously enumerated by 
\[
z_j := \gamma_\theta(-;r_j(0),x_j(0))^{-1} \divideontimes r_j e^{ix_j} \divideontimes \gamma_\theta(-;r_j(1),x_j(1)), \qquad j \in \N.
\]
Since $z_j = |z_j| \exp(i \eta_\theta(-;x_j(0))^{-1} \divideontimes x_j \divideontimes \eta_\theta(-;x_j(1)))$ for each $j,$ we obtain
\begin{equation}
\label{Equation: Mu-invariant of Ttheta}
\mu(\{T_\theta(t)\}_{t \in [0,1]}) = \sum^\infty_{j=1} \frac{z_j(1) - z_j(0)}{2\pi} = \sum^\infty_{j=1} \frac{\eta_\theta(1;x_j(1)) - \eta_\theta(1;x_j(0))}{2\pi} .
\end{equation}
It therefore remains to verify the following non-trivial equality;

\begin{equation}
\label{Equation1: Infinitesimal Spectral Flow}
\frac{\eta_\theta(1; x') - \eta_\theta(1; x)}{2\pi} = [\theta;x,x'], \qquad x, x' \in \R.
\end{equation}
Let $x = \theta_0 + 2\pi k_0$ and $x' = \theta_0' + 2\pi k_0'$ with $\theta_0,\theta_0' \in [0,2\pi)$ and $k_0,k_0' \in \Z.$  Clearly, from the definition of $\eta_\theta,$ we have the following four possibilities:
\begin{equation}
\label{Equation2:  Infinitesimal Spectral Flow}
\frac{\eta_\theta(1;x') - \eta_\theta(1;x)}{2\pi}
= 
\begin{cases}
k_0' - k_0, &\mbox{if } \theta_0,\theta_0' \leq \theta,\\ 
k_0' - k_0 + 1, &\mbox{if } \theta_0 \leq \theta < \theta_0',\\
k_0' - k_0 - 1, &\mbox{if } \theta_0' \leq \theta < \theta_0,\\
k_0' - k_0, &\mbox{if } \theta < \theta_0,\theta'_0.
\end{cases}
\end{equation}
Let $x < x',$ and let $Z_\theta:= \{ k \in \Z \mid x \leq \theta + 2 \pi k <x' \}.$ It is obvious that for any integer $k$ satisfying $k_0 < k < k_0'$ we have $k \in Z_\theta.$ There are precisely $k_0' - k_0 - 1$ of such $k$ in total. Moreover, we have
\begin{align*}
&k_0 \in Z_\theta \mbox{ if and only if } \theta_0 \leq \theta, \\
&k'_0 \in Z_\theta \mbox{ if and only if } \theta < \theta'_0.
\end{align*}
It follows that \cref{Equation2: Infinitesimal Spectral Flow} and $[\theta;x,x'] = \# Z_\theta$   agree to each other, provided that  $x < x'.$ On the other hand, if $x' < x,$ then $[\theta;x,x'] = -[\theta;x',x] = -(\eta_\theta(1;x) - \eta_\theta(1;x'))/2\pi.$ The formula \cref{Equation1: Infinitesimal Spectral Flow} has been verified. With this result in mind \cref{Equation: Mu-invariant of Ttheta} becomes
\[
\mu(\{T_\theta(t)\}_{t \in [0,1]}) 
= \sum^\infty_{j=1} \frac{\eta_\theta(1;x_j(1)) - \eta_\theta(1;x_j(0))}{2\pi}
= \sum^\infty_{j=1} [\theta;x_j(0),x_j(1)]. 
\]
\end{proof}

\begin{proof}[Proof of \cref{Theorem: Properties of Mu-invariant}]
The well-definedness of $\mu\left(-,\{S(t)\}_{t \in [0,1]} \right)$ follows from \cref{Lemma: Finite Sum} and \cref{Lemma: Mu-invariant for Paths}. The first three assertions (i),(ii),(iii) follow from the formula \cref{Equation: Definition of mu-invariant for Paths} and \cref{Theorem: Flow in Circle Case}. Finally, the last assertion (iv) follows from \cref{Equation: Definition of mu-invariant for Loops}.
\end{proof}

\subsection{Spectral flow}
\cref{Theorem: Big Continuous Enumeration} and \cref{Theorem: Properties of Mu-invariant} allow us to state the following definition;
\begin{defn}
\label{Definition: Spectral Flow}
Let $\cH$ be a separable Hilbert space, and let $\Phi$ be a regular symmetric norm. Let $\eG = \eU_\Phi(\cH,1)$ or $\eG = \eG_\infty(\cH,1).$ Given a continuous one-parameter family $\{T(t)\}_{t \in [0,1]}$ in $\eG,$ its \textbi{spectral flow} is a  function $\flow(-,\{T(t)\}_{t \in [0,1]}) : (0,2\pi) \to \Z$ defined by the formula \cref{Equation: Spectral Flow of T}, where $\{\sigma(T(t))\}_{t \in [0,1]}$ is always viewed as a one-parameter family in $\eS_\infty(\C^*,1)$ through the canonical identification in \cref{Remark: Identification}.
\end{defn}

\begin{thm}
With the notation introduced in \cref{Definition: Spectral Flow} in mind, we have the following assertions:
\begin{enumerate}[(i)]
  \item If $S,T$ are path-homotopic in $\eG,$ then $\flow(-,\{S(t)\}_{t \in [0,1]}) = \flow(-,\{T(t)\}_{t \in [0,1]}).$
  \item If $S(1) = T(0),$ then $\flow(-,\{(S \divideontimes T)(t)\}_{t \in [0,1]}) = \flow(-,\{S(t)\}_{t \in [0,1]}) + \flow(-,\{T(t)\}_{t \in [0,1]}).$
  \item We have $\flow(-,\{S^{-1}(t)\}_{t \in [0,1]}) = - \flow(-,\{S(t)\}_{t \in [0,1]}).$
  \item If $S(0)=S(1)=1,$ then $\flow(-,\{S(t)\}_{t \in [0,1]}) = \mathrm{const}.$
\end{enumerate}
\end{thm}
\begin{proof}
The claim immediately follows from \cref{Theorem: Properties of Mu-invariant}. 
\end{proof}

\appendix
\renewcommand{\thesection}{\Alph{section}} 

\section{Separability and Completeness}
\label{Section: Appendix}

The ultimate purpose of the current section is to prove that the $\Phi$-multiset functor $\Lip \to \Lip$ in \cref{Section: Multiset functors} preserves separability and completeness; 

\begin{thm}
\label{Theorem: Separability and Completeness}
Let $\Phi$ be a regular symmetric norm, and let $(X,x_0)$ be a based metric space. Then the following assertions hold true:
\begin{enumerate}[(i)]
\item If $X$ is separable, then so is $\eS_\Phi(X,x_0).$
\item If $X$ is complete, then so is $\eS_\Phi(X,x_0).$
\end{enumerate}
\end{thm}

\subsection{Notation}

Let $\Phi$ be a regular symmetric norm throughout the current section. Assume that $(\xi_i)_{i}$ is either a finite-sequence of non-negative numbers or an infinite sequence of non-negative numbers converging to $0.$ We define the sequence $\xi^\downarrow = (\xi^\downarrow_i)_{i}$ as the non-increasing rearrangement of $\xi_1, \xi_2,\dots.$ That is, we define $\xi^\downarrow$ through
\begin{equation}
\label{Definition: Non-increasing Rearrangement}
\xi^\downarrow_1
    = \max_{i \in \N} \xi_i, \qquad \xi^\downarrow_1 + \xi^\downarrow_2
    = \max_{i \neq j}\left(\xi_i + \xi_j\right), \qquad \dots
\end{equation}

Let $\ell_\Phi(\R_+)$ be the set of all infinite $\Phi$-summable sequences of non-negative real numbers. Since $\ell_\Phi(\R_+)$ is a closed subset of the Banach space $\ell_\Phi(\R),$ it is a complete metric space. Given a sequence $\xi \in \ell_\Phi(\R_+),$ we let for each $n \in \N$
\begin{align}
\label{Equation: Supscript Truncation} 
	\xi^{[n]}   &:= (\xi_1,\dots,\xi_n,0,0,\dots), \\
\label{Equation: Subscript Truncation} 
	\xi_{[n]}   &:= (\xi_{n+1},\xi_{n+2},\dots).
\end{align}
Let $(Y,y_0) := (\R_+,0),$ and let $\rho(x,y) := |x - y|$ for each $x,y \in \R_+.$ Since $d(x_0,-) : X \to \R_+$ is a $1$-Lipschitz continuous mapping, it induces the $1$-Lipschitz continuous mapping $d(x_0,-)_* : \eS_\Phi(X,x_0) \to \eS_\Phi(\R_+,0)$ as in \cref{Section: Multiset functors}. For brevity we let
\[
d_* := d(x_0,-)_*.
\]
With this convention in mind we have $d_*(O_{x_0}) = \{0, 0, \dots\}^* = O_0.$ Note that the induced mapping $d_*$ preserves rank;
\begin{equation}
\label{Equation: Rank Preserving Mapping}
\rank d_*(S) = \rank S, \qquad S \in  \eS_\Phi(X,x_0).
\end{equation}

\subsection{Continuity of the non-increasing rearrangement}
We shall prove that $\ell_\Phi(\R_+) \ni \xi \longmapsto \xi^\downarrow \in \ell_\Phi(\R_+)$ is $1$-Lipschitz continuous (\cref{Theorem: Majorisation is Continuous}). Let us start with the following finite-dimensional version;

\begin{lem}
\label{Lemma1: Symmetric Norms}
If $\xi,\eta \in \R^n$ are finite $n$-tuples of non-negative numbers, then
\[
\Phi(|\xi^\downarrow_1 - \eta^\downarrow_1|, \dots, |\xi^\downarrow_n -
\eta^\downarrow_n|,0,0,\dots) \leq \Phi(|\xi_1 - \eta_1|, \dots, |\xi_n - \eta_n|,0,0,\dots)
\]
\end{lem}
\begin{proof}
Given finite $n$-tuples $x,y \in \R^n$ of non-negative numbers, we say that $x$ is \textit{weakly majorised} by $y,$ if
\begin{equation}
\label{Equation: Weak Majorisation}
\sum^k_{i=1} x^\downarrow_i \leq \sum^k_{i=1} y^\downarrow_i, \qquad k = 1, \dots, n.
\end{equation} 
It is shown in \cite[Theorem 6.A.2.a]{Book:Marshall-Olkin79} that $x := (|\xi^\downarrow_i - \eta^\downarrow_i|)_{i \in \N}$ is weakly majorised by $y := (|\xi_i - \eta_i|)_{i \in \N}.$ The claim follows from \cref{Lemma: Majorisation}.
\end{proof}

\begin{lem}
\label{Lemma2: Symmetric Norms}
If $\xi \in  \ell_\Phi(\R_+),$ then $\left(\xi^{[n]}\right)^\downarrow \to \xi^\downarrow$ as $n \to \infty.$
\end{lem}
Note that $\left(\xi^{[n]}\right)^\downarrow \neq \left(\xi^\downarrow\right)^{[n]}$ in general (otherwise this claim would be trivial).
\begin{proof}
We may assume without loss of generality that all of the terms of $\xi \in \ell_\Phi(\R_+)$ are non-zero. It follows from the regularity of $\Phi$ that for any $\epsilon > 0$ there exists an integer $n_0,$ such that $\Phi\left(\xi_{[n_0]}\right) < \epsilon / 2$ and $\Phi\left({\xi^\downarrow}_{[n_0]}\right) < \epsilon / 2.$ Furthermore, there exists an integer $N > n_0,$ such that for all $n > N$ we have $\Phi\left(\xi_{[n]}\right) < \xi^\downarrow_{n_0}.$ It follows that for all $n > N$ the numbers $\xi_{n+1}, \xi_{n+2}, \dots$ are all strictly less than $\xi^\downarrow_{n_0}$ by \cref{Corollary: Basic Properties of Symmetric Norms} (iii). That is, the first $n_0$ terms of $\xi^\downarrow, \left(\xi^{[n]}\right)^\downarrow$ must be identical. It follows that
\[
\Phi\left(\xi^\downarrow - \left(\xi^{[n]}\right)^\downarrow \right) 
= \Phi\left({\xi^\downarrow}_{[n_0]} -  {\left(\xi^{[n]} \right)^\downarrow}_{[n_0]} \right) 
< \frac{\epsilon}{2} + \Phi\left({\left(\xi^{[n]}\right)^\downarrow}_{[n_0]}\right), \qquad n > N.
\]
It remains to prove $\Phi\left({\left(\xi^{[n]}\right)^\downarrow}_{[n_0]}\right) < \epsilon/2$ for any $n > N.$ For such $n,$ there exists a permutation $\pi$ of $\{1,\dots,n\},$ such that 
$
\left(\xi^{[n]}\right)^\downarrow = (\xi_{\pi_1}, \dots, \xi_{\pi_n},0,0,\dots).
$
That is, $\xi_{\pi_1} \geq \dots \geq \xi_{\pi_n}.$ It follows that
\begin{align*}
\Phi\left({\left(\xi^{[n]}\right)^\downarrow}_{[n_0]}\right)
    &= \Phi(\xi_{\pi_{n_0 + 1}}, \dots, \xi_{\pi_{n}},0,0,\dots) \\
    &= \Phi(\xi_{\pi_{n}}, \dots, \xi_{\pi_{n_0 + 1}},0,0,\dots) \\
    &\leq \Phi(\xi_{n_0 + 1}, \dots, \xi_{n},0,0,\dots) \\
    &\leq \Phi\left(\xi_{[n_0]}\right) < \frac{\epsilon}{2},
\end{align*}
where the second equality follows from the invariance of $\Phi$ under permutations, and the first inequality follows from \cref{Corollary: Basic Properties of Symmetric Norms} with in mind that $\xi_{\pi_{n}} \leq \dots \leq \xi_{\pi_{n_0 + 1}}$ are the first $n - n_0$ numbers taken from the non-decreasing rearrangement of $\xi_1, \dots, \xi_n.$ The proof is complete.
\end{proof}

\begin{pro}
\label{Theorem: Majorisation is Continuous}
The mapping $\ell_\Phi(\R_+) \ni \xi \longmapsto \xi^\downarrow \in \ell_\Phi(\R_+)$ is $1$-Lipschitz continuous;
\begin{equation}
\label{Majorization Inequality}
\Phi(|\xi^\downarrow_1 - \eta^\downarrow_1|, |\xi^\downarrow_2 -
\eta^\downarrow_2|,\dots) \leq \Phi(|\xi_1 - \eta_1|, |\xi_2 - \eta_2|, \dots), 
\qquad \xi,\eta \in \ell_\Phi(\R_+).
\end{equation}
\end{pro}
\begin{proof}
It follows from \cref{Lemma1: Symmetric Norms} that $\Phi\left(\left| \left(\xi^{[n]}\right)^\downarrow - \left(\eta^{[n]}\right)^\downarrow \right|\right) \leq \Phi\left(\left|{\xi^{[n]}} - {\eta^{[n]}}\right|\right)$ for all $n \in \N.$ By \cref{Lemma2: Symmetric Norms}, taking the limit as $n \to \infty$ completes the proof.
\end{proof}

\subsection{Finite-rank multisets}

For each $k = 0, 1, 2, \dots,$ let $\eF_k(X,x_0)$ be the set of all multisubsets of $(X,x_0)$ with rank less than or equal to $k.$ Let $\eF_\infty(X,x_0) := \bigcup_{k \in \N} \eF_k(X,x_0).$ Note that $\eF_k(X,x_0) \subseteq \eS_\Phi(X,x_0)$ for each $k=0,1,\dots, \infty.$

\begin{lem}
\label{Lemma: Dense Lemma}
We have the following assertions:
\begin{enumerate}[(i)]
\item The set $\eF_\infty(X,x_0)$ is a dense subset of $\eS_\Phi(X,x_0).$
\item The set $\eF_k(X,x_0)$ is a closed subset of $\eS_\Phi(X,x_0)$ for each finite $k=0,1,\dots.$
\end{enumerate}
\end{lem}
\begin{proof}
For the first assertion, if $S = \{s_1,s_2,\dots\}^*$ belongs to $\eS_\Phi(X,x_0),$ then
\[
\lim_{i \to \infty} d_\Phi(S, \{s_1,\dots,s_i\}^*) \leq \lim_{i \to \infty} \Phi(0,\dots,0,d(x_0,s_{i+1}),d(x_0,s_{i+2}),\dots) = 0,
\]
where the last equality follows from \cref{Lemma: Metric Space Remark} (iii). The claim follows.

As for the second assertion, it suffices to prove the claim for $(X,x_0) = (\R_+,0)$ by the rank preserving property \cref{Equation: Rank Preserving Mapping}. Assume that there exists a sequence $(S_n)_{n \in \N}$ in $\eF_k(\R_+,0)$ converging to $S_0 \in \eS_\Phi(\R_+,0).$ Note that for each $n = 0, 1, 2,\dots,$ the multiset $S_n$ admits the following unique non-increasing enumeration;
\begin{equation}
\label{Equation1: Decreasing Enumeration}
S_n = \{s^n_1, s^n_2, \dots\}^*, \qquad s^n_1 \geq s^n_2 \geq \dots.
\end{equation}
It follows from \cref{Theorem: Majorisation is Continuous} that
\begin{equation}
\label{Equation2: Decreasing Enumeration}
d_\Phi(S_m, S_n) \geq \Phi(s^m_1-s^n_1, s^m_2-s^n_2, \dots) \geq \sup_{i \in \N} |s^m_i - s^n_i|, \qquad m,n \in \N \cup \{0\}.
\end{equation}
It follows that for each $i \in \N$ we $s^n_i \to s^0_i$ as $n \to \infty.$ Since $\rank S_n \leq k$ for each $n \in \N,$ we have $s^n_{k+i} = 0$ for each $i = 1,2, \dots,$ and so $s^n_{k+i} \to s^0_{k+i} = 0$ as $n \to \infty.$  We get $S_0 \in \eF_k(\R_+,0).$
\end{proof}

\subsection{Separability (Proof of \texorpdfstring{\cref{Theorem: Separability and Completeness} (i)}{})}

\begin{proof}[Proof of \cref{Theorem: Separability and Completeness} (i)]
By \cref{Lemma: Dense Lemma} (i), it suffices to construct a countable dense subset of $\eF_\infty(X,x_0).$ Since $X$ is separable, it has a countable dense subset $A.$ We may assume without loss of generality that $x_0 \in A.$ We show that the countable set $\{S \in \eF_\infty(X,x_0) \mid \supp S \subseteq A \}$ is a dense subset of $\eF_\infty(X,x_0)$ (recall that the set of all infinite sequences of natural numbers which are eventually constant is countable). Let $S =
\{s_1, \dots, s_n\}^*$ be in $\eF_\infty(X,x_0).$ Since $A$ is a dense subset of $X,$ there exist $n$ sequences $(s^1_i)_{i \in \N}, \dots, (s^n_i)_{i \in \N}$
in $A$ converging to $s_1, \dots, s_n$ respectively. It follows from \cref{Equation: Finite Sum} that $\{s^1_i, \dots, s^n_i\}^* \to S$ as $i \to \infty.$ The claim follows.
\end{proof}

\subsection{Completeness (Proof of \texorpdfstring{\cref{Theorem: Separability and Completeness} (ii)}{})}

We shall assume that $X$ is a complete metric space throughout. Let us first prove the following special case of \cref{Theorem: Separability and Completeness} (ii);

\begin{lem}
\label{Lemma: Baby Completeness}
The metric space $\eS_\Phi(\R_+,0)$ is complete.
\end{lem}
\begin{proof}
Let $(S_n)_{n \in \N}$ be a Cauchy sequence in $\eS_\Phi(\R_+,0).$ We assume that each $S_n$ admits the unique non-increasing enumeration $s^n = (s^n_j)_{j \in \N}$ given by \cref{Equation1: Decreasing Enumeration}. It follows from \cref{Equation2: Decreasing Enumeration} that the sequence $(s^n)_{n \in \N}$ is a Cauchy sequence in the complete metric space $\ell_\Phi(\R_+),$ and so it converges to $s^0 = (s^0_1, s^0_2,\dots)$ in $\ell_\Phi(\R_+).$ Note that $S_0 := \{s^0_1, s^0_2,\dots\}^*$ is $\Phi$-summable, since $s^0 \in \ell_\Phi(\R_+).$ Now,
\begin{align*}
d_\Phi(S_n, S_0) \leq \Phi(|s^n_1 - s^0_1|, |s^n_2 - s^0_2|,\dots) = \Phi(s^n - s^0) \to 0, \qquad n \to \infty.
\end{align*}
That is, the Cauchy sequence $(S_n)_{n \in \N}$ converges to $S_0.$ The claim follows.
\end{proof}

Evidently, if $(S_n)_{n \in \N}$ is a Cauchy sequence in  $\eS_\Phi(X,x_0),$ then $(d_*(S_n))_{n \in \N}$ is a Cauchy sequence in the complete metric space $\eS_\Phi(\R_+,0).$ It follows from \cref{Lemma: Baby Completeness} that the sequence $(d_*(S_n))_{n \in \N}$ always converges to some multiset in $\eS_\Phi(\R_+,0).$ With this fact in mind, we shall prove the following sequence version of \cref{Proposition: Reducing Open Sets};

\begin{lem}
\label{Lemma: Split Cauchy Sequence into Finitely Many Parts}
Let $(S_n)_{n \in \N}$ be a Cauchy sequence in $\eS_\Phi(X,x_0)$ with $D := \lim_{n \to \infty} d_*(S_n),$ and let $(I_0,\dots, I_k)$ be a positively separated tuple of open subsets of $\R_+$ in the sense of \cref{Definition: Definition of Positively Separated Tuples} (ii), such that
$
D \subseteq I_0 \cup \dots \cup I_k.
$
Let $U_0, \dots, U_k$ be the inverse images of $I_0,\dots, I_k$ under $d(x_0,-) : X \to \R_+.$ Then the sequences $(S^0_n)_{n \in \N}, \dots,(S^k_n)_{n \in \N}$ given by the following formula are Cauchy sequences;
\begin{equation}
\label{Equation: Split Cauchy Sequence into Finitely Many Pars}
S^i_n := S_n \cap U_i, \qquad i=0,1, \dots, k.
\end{equation}
Furthermore, there exists an integer $N \in \N$ such that
\begin{enumerate}[(i)]
\item We have $S_n  = S^0_n + \dots + S^k_n$ for all $n \geq N.$
\item We have $d_\Phi(S_m^i, S_n^i) \leq d_\Phi(S_m,S_n)$ for each $i = 0, \dots, k$ and for each $m,n \geq N.$ 
\item We have $\rank S_m^i = \rank S_n^i$ for each $i = 1, \dots, n$ and for each $m,n \geq N.$
\end{enumerate}
\end{lem}
Note that we assume $0 \in I_0$ as in \cref{Definition: Definition of Positively Separated Tuples} (ii), so that $x_0 \in U_0.$ Moreover,
\begin{equation}
\label{Equation: Uj and Ij}
d_*(S \cap U_j) = d_*(S) \cap I_j, \qquad S \in \eS_\Phi(X,x_0), \qquad j= 0, \dots, k.
\end{equation}
\begin{proof}
Given the positively separated tuple $(I_0,\dots, I_k)$ as above, let us first show that $(U_0, \dots, U_k)$ is positively separated tuple of open subsets of $X.$ Indeed, if $i \neq j,$ then
\[
\dist (U_i,U_j)
    = \inf_{(u_i,u_j) \in U_i \times U_j} d(u_i,u_j)
    \geq \inf_{(u_i,u_j) \in U_i \times U_j} |d(u_i,x_0) - d(x_0,u_j)|
    \geq \dist (I_i,I_j).
\]
Let $I := I_0 \cup \dots \cup I_k,$ and let $U := U_0 \cup \dots \cup U_k.$ Since $d_*(S_n) \to D$ as $n \to \infty,$ there exists an integer $N$ such that for all $n \geq N$ we have $d_*(S_n) \in \eS^{I}_\Phi(\R_+, 0)$ by \cref{Lemma: Openness of Restricted Sp}. It follows that $S_n \in \eS^U_\Phi(X,x_0)$ for all $n \geq N.$ We can increase $N,$ if necessary, to ensure that for each $m,n \geq N$ the metric $d_\Phi(S_m,S_n)$ never exceeds the separation of the tuple $(U_0, \dots, U_k).$ The claim now follows from \cref{Lemma: Intersection Inequality}.
\end{proof}

\begin{cor}
\label{Lemma: Constant-rank Cauchy Sequence}
Let $\Phi$ be a regular symmetric norm, and let $(X,x_0)$ be a based complete metric space. Then $\eF_k(X,x_0)$ is a complete metric space for each finite $k= 0, 1,2, \dots.$
\end{cor}
\begin{proof}
Let $(S_n)_{n \in \N}$ be a Cauchy sequence in $\eF_k(X,x_0).$ We shall proceed by induction on $k.$ Since the set $\eF_0(X,x_0)$ consists only of one multiset $O_{x_0},$ we shall start with the base step $k=1.$ Then there exists a sequence $(s_n)_{n \in \N}$ of points in $X,$ such that $S_n = \{s_n\}^*$ for all $n \in \N.$ It follows from \cref{Equation: Phi-estimate} that $(s_n)_{n \in \N}$ is Cauchy sequence in $X,$ and so $(s_n)_{n \in \N}$ converges to some point $s_0 \in X.$ It follows from \cref{Equation: Finite Sum} that $(S_n)_{n \in \N}$ converges to $S_0 := \{s_0\}^*.$

For the induction step, assume that the claim has been proved for $k$ replaced by any smaller number. It suffices to consider the non-trivial case that $d_*(S_n) \to D$ for some $D$ which is not $O_0 = \{0,0,\dots\}^*.$ It follows from \cref{Lemma: Dense Lemma} (ii) that $0 < \rank D \leq k.$ We can then construct a positively separated tuple $(I_0, \dots, I_k),$ such that $D \subseteq I_0 \cup \dots \cup I_k,$ where for each $i=0, \dots, k$ the multiset $D \cap I_i$ has rank strictly less than $k.$ For such $i,$ we define $S^i := (S^i_n)_{n \in \N}$ according to \cref{Equation: Split Cauchy Sequence into Finitely Many Pars}. The claim follows from \cref{Lemma: Split Cauchy Sequence into Finitely Many Parts} and the induction hypothesis.
\end{proof}

We are now in a position to prove the completeness of $\eS_\Phi(X,x_0);$
\begin{proof}[Proof of \cref{Theorem: Separability and Completeness} (ii)]
Let $(S_n)_{n \in \N}$ be a Cauchy sequence in $\eS_\Phi(X,x_0),$ and let $D := \lim_{n \to \infty} d(S_n).$ Suppose that
$
\supp D = \{d_1, d_2, \dots, 0\},
$
where $d_1 > d_2 > \dots > 0,$ and that each $d_i$ has  multiplicity $m_i$ in the multiset $D.$ We shall consider the non-trivial case $\rank D = \infty.$ Let $\{I_i\}_{i \in \N} = \{(\alpha_i,\beta_i)\}_{i \in \N}$ be a countable family of open intervals in
$\R,$ such that $
\bigcap_{i \in \N} [\alpha_i, \beta_i] = \emptyset
$ and $d_i \in I_i$ for each $i \in \N.$

(A) For any $k \in \N,$ we set $I_0 := [0, \beta_{k+1}),$ so that $(I_0,\dots,I_k)$ forms a positively separated tuple of open subsets
of $\R_+,$ and so \cref{Lemma: Split Cauchy Sequence into Finitely Many Parts} holds true. We define $k + 1$ sequences $(S^0_n)_{n \in \N}, \dots, (S^k_n)_{n \in \N}$ by \cref{Equation: Split Cauchy Sequence into Finitely Many Pars}. It follows from \cref{Lemma: Constant-rank Cauchy Sequence} that the last $k$ sequences $(S^1_n)_{n \in \N}, \dots, (S^k_n)_{n \in \N}$ all converge to finite-rank multisets $S^1_{0},\dots,S^k_0 \in \eS_\Phi(X,x_0).$ It follows that for each $i=1, \dots,k,$ we have
\begin{align}
\label{Equation1: Separability Theorem}
&d_*(S^i_0)
    = \lim_{n \to \infty} d_*(S_n \cap U_i)
    = \lim_{n \to \infty} \left(d_*(S_n) \cap I_i \right)
    = D \cap I_i, \\
\label{Equation2: Separability Theorem}
&S^i_0 \subseteq U_i, 
\end{align}
where the second equality in \cref{Equation1: Separability Theorem} follows from \cref{Equation: Uj and Ij} and the last equality in \cref{Equation1: Separability Theorem} follows from \cref{Corolary: Intersection Continuity} and \cref{Lemma: Openness of Restricted Sp}. It follows from \cref{Equation: Rank Preserving Mapping} that the rank of each $S^i_0$ is $m_i.$ That is, each $S^i_0$ admits a representation
$
S^i_0 = \{s^i_1, \dots, s^i_{m_i}\}^*,
$
so that $d_*(S^i_0) = D \cap I_i = \{d_i, \dots, d_i\}^*$ for each $i = 1, \dots, k.$

(B) The previous part allows us to define $S_0 := \{s^1_1,\dots,s^1_{m_1}, s^2_2,\dots,s^2_{m_2}, \dots\}^*,$ the $\Phi$-summability of which follows from that of $d_*(S_0) = D.$ We show that $S_n \to S_0$ as $n \to \infty.$ Let $\epsilon > 0$ be arbitrary. Then there exists large enough $k \in \N,$ such that
\begin{align}
\label{Equation3: Separability Theorem}
& I_0 := [0,\beta_{k+1}), \\
\label{Equation4: Separability Theorem}
&\rho_\Phi(D \cap I_0, O_0)
    = \Phi(d_{k+1}, \dots, d_{k+1}, d_{k+2}, \dots, d_{k+2},\dots)
< \frac{\epsilon}{4},
\end{align}
where $\rho(x,y) = |x - y|$ is the standard metric on $\R_+$ and each $d_{k+j}$ in \cref{Equation4: Separability Theorem} is repeated $m_{k+j}$ times. Since the last equality in \cref{Equation1: Separability Theorem} also holds true for $i = 0,$ there exists $N \in \N,$ such that 
\begin{equation}
\label{Equation5: Separability Theorem}
\rho_\Phi(d_*(S_n) \cap I_0, O_0) 
\leq \rho_\Phi(d_*(S_n) \cap I_0, D \cap I_0)  + \rho_\Phi(D \cap I_0, O_0) 
< \frac{\epsilon}{2}, \qquad n \geq N.
\end{equation}
It follows from \cref{Lemma: Split Cauchy Sequence into Finitely Many Parts} that we can always increase $N,$ if necessary, so that
\begin{equation}
\label{Equation6: Separability Theorem}
S_n = S^0_n + \dots + S^k_n, \qquad 
\sum^k_{i=1} d_\Phi(S^i_n, S^i_0) < \frac{\epsilon}{4}, 
\qquad n \geq N.
\end{equation}
Note that \cref{Equation2: Separability Theorem} holds for each $i = k+1, k+2, \dots,$ and so $S^0_0 := S_0 - (S^1_0 + \dots + S^k_0) = S_0 \cap U_0.$ It follows that for all $n \geq N$ we have the following estimate;
\begin{align*}
d_\Phi(S_n, S_0)
&= d_\Phi(S^0_n + \dots + S^k_n, S^0_0 + \dots + S^k_0) \\
&\leq d_\Phi(S^0_n, S^0_0) + \sum^k_{i=1} d_\Phi(S^i_n, S^i_0)\\
&\leq d_\Phi(S^0_n,O_{x_0}) + d_\Phi(O_{x_0}, S^0_0) + \sum^k_{i=1} d_\Phi(S^i_n, S^i_0) \\
&= \rho_\Phi(d_*(S^0_n),O_{0}) + \rho_\Phi(d_*(S^0_0),O_{0}) + \sum^k_{i=1} d_\Phi(S^i_n, S^i_0) \\
&= \rho_\Phi(d_*(S_n) \cap I_0, O_0) + \rho_\Phi(D \cap I_0, O_0) +\sum^k_{i=1} d_\Phi(S^i_n, S^i_0),
\end{align*}
where the first inequality follows from \cref{Sum Inequality} and the last equality follows from \cref{Equation: Uj and Ij}. It follows from \crefrange{Equation4: Separability Theorem}{Equation5: Separability Theorem}
that $d_\Phi(S_n, S_0) < \epsilon$ for each $n \geq N,$ and so $S_n \to S_0$ as $n \to \infty.$ The claim follows.
\end{proof}



\end{document}